		\patchcmd{\ps@pprintTitle}{\footnotesize\itshape
		Preprint submitted to \ifx\@journal\@empty Elsevier
		\else\@journal\fi\hfill\today}{\relax}{}{}
\newtheorem{theorem}{Theorem}[section]
\newtheorem{lemma}{Lemma}[section]
\journal{arXiv}
\def\tr{{\rm{tr}}}
\begin{document}

\begin{frontmatter}
\title{A Holling--Tanner predator-prey model with strong Allee effect}
\author[QUTaddress,UDLAaddress]{Claudio Arancibia--Ibarra}
\author[USDaddress]{Jos\'e Flores}
\author[QUTaddress]{Graeme Pettet}
\author[QUTaddress]{Peter van Heijster}
\address[QUTaddress]{School of Mathematical Sciences, Queensland University of Technology, \\ 
GPO Box 2434, GP Campus, Brisbane, Queensland 4001 Australia\\
claudio.arancibia@hdr.qut.edu.au}
\address[UDLAaddress]{Facultad de Educaci\'on, Universidad de Las Am\'ericas,\\
Av. Manuel Montt 948, Santiago, Chile}
\address[USDaddress]{Department of Computer Science, The University of South Dakota\\
Vermillion, SD 57069, South Dakota, USA}
\begin{abstract}
We analyse a modified Holling--Tanner predator-prey model where the predation functional response is of Holling type II and we incorporate a strong Allee effect associated with the prey species production. The analysis complements results of previous articles by Saez and Gonzalez-Olivares (SIAM J. Appl. Math.~59 1867--1878, 1999) and Arancibia-Ibarra and Gonzalez-Olivares (Proc. CMMSE 2015 130--141, 2015)
discussing Holling--Tanner models which incorporate a weak Allee effect. The extended model exhibits rich dynamics and we prove the existence of separatrices in the phase plane separating basins of attraction related to co-existence and extinction of the species. We also show the existence of a homoclinic curve that degenerates to form a limit cycle and discuss numerous potential bifurcations such as saddle-node, Hopf, and Bogadonov--Takens bifurcations.
\end{abstract}
\begin{keyword}
Holling--Tanner type II, strong Allee effect, bifurcations.
\end{keyword}
\end{frontmatter}

\newpage
\section{Introduction}
Predator-prey models have been studied extensively in the field of mathematical ecology to describe the dynamic interactions and variations in populations over time. 
In particular the Holling--Tanner models have been shown to be very effective in describing real predator-prey systems such as those describing mite and spider-mite \cite{kozlova}, Canadian lynx and snowshoe hare \cite{chivers}, and sparrow and sparrowhawk \cite{moller} interactions. For instance, Wolkind {\em et al.\ }\cite{wollkind} use a particular Lotka--Volterra model to investigate the dynamics of a pest in fruit-bearing trees, under the hypothesis that the parameters depend on the temperature.
The dynamic complexities in these Holling--Tanner models are also of mathematical interest, both on  temporal domains \cite{arancibia2, arrows, saez, yu, zhao} and on spatio-temporal domains \cite{banerjee1, ghazaryan}.  

The following pair of equations is a typical representation of a purely temporal Holling--Tanner model, where $x(t)$ is used to represent the size of the prey population at time $t$, and $y(t)$ is used to represent the size of the predator population at time $t$;
\begin{equation}\label{prey-predator}
\begin{aligned}
\dfrac{dx}{dt} &=	 rx\left( 1-\dfrac{x}{K}\right) - H(x)y\,, \\
\dfrac{dy}{dt} &=	 sy\left( 1-\dfrac{y}{nx}\right)\,.
\end{aligned}
\end{equation} 
Here, $r$ and $s$ are the intrinsic growth rate for the prey and predator respectively, $n$ is a measure of the quality of the prey as food for the predator, $K$ is the prey environmental carrying capacity, and $H(x)$ is the predation functional response used to represent the impact of predation upon the prey species. The growth of the predator and prey population is of logistic form and all the parameters are assumed to be positive. The behaviour of the Holling--Tanner model depends on the type of the predation functional response chosen. Holling proposed three principal type of functional response\cite{holling} representing different behaviors of the types of species: Type I is a linear increasing function, Type II is hyperbolic in form, and  Type III is a sigmoid. In this manuscript, we will use a Holling Type II functional response. This type of functional response occurs in species when the number of prey consumed rises rapidly at the same time that the prey densities increases\cite{may,turchin}. In (\ref{prey-predator}) the Type II functional response adopted corresponds to $H(x) = qx/(x+a)$, where $q$ is the maximum predation rate per capita and $a$ is half of the saturated response level \cite{turchin}.
The resulting Holling--Tanner model is an autonomous two-dimensional system of differential equations and is given by
\begin{equation}\label{ht1}
\begin{aligned}
\dfrac{dx}{dt} & = rx \left( 1-\dfrac{x}{K}\right)-\dfrac{qxy}{x+a}\,, \\ 
\dfrac{dy}{dt} & = sy\left( 1\ -\dfrac{y}{nx}\right) \,.
\end{aligned}  
\end{equation}
This system is singular when $x(t)=0$, but it can be desingularised to give a topologically equivalent system as studied by Saez and Gonzalez--Olivares \cite{saez}. The authors analysed the global stability of the unique equilibrium point in the first quadrant of this equivalent system and showed that  the equilibrium point can be stable, unstable surrounded by a stable limit cycle or stable surrounded by two limit cycles.

An Allee effect is a density-dependent phenomenon in which fitness, or population growth, increases as population density increases \cite{allee, berec, courchamp,kramer, stephens}. This phenomenon is also called depensation in fisheries sciences and positive density dependence in population dynamics \cite{liermann}.	
In ecology, these mechanisms are connected with individual cooperation such as strategies to hunt, collaboration in unfavourable abiotic conditions, and reproduction \cite{courchamp3}. When the population density is low species might have more resources and benefits. However, there are species that may suffer from a lack of conspecifics. This may impact their reproduction or reduce the probability to survive when the population volume is low \cite{courchamp2}. In general, the Allee effect may appear due to a wide range of biological phenomena, such as reduced anti-predator vigilance, social thermo-regulation, genetic drift, mating difficulty, reduced defense against the predator, and deficient feeding because of low population densities \cite{stephens2}.

With an Allee effect included, the Holling--Tanner Type II model (\ref{ht1}) becomes
\begin{equation}
\begin{aligned} \label{hta1}
\dfrac{dx}{dt} &= rx\left(1-\dfrac{x}{K}\right)(x-m) - \dfrac{qxy}{x+a}\,,\\
\dfrac{dy}{dt} &= sy\left(1-\dfrac{y}{nx} \right).
\end{aligned}
\end{equation}
The growth function $\ell(x) = rx(1-x/K)(x-m)$ has an enhanced growth rate as the population increases above the threshold population value $m$. If $\ell(0)=0$ and $\ell'(0) \geq 0$ - as it is the case with $m  \leq 0$ - then $\ell(x)$ represents a proliferation exhibiting a weak Allee effect, whereas if $\ell(0)=0$ and $\ell'(0)<0$ - as it is the case with $m>0$ - then $\ell(x)$ represents a proliferation exhibiting a strong Allee effect \cite{yue2}. 

The aim of this manuscript is to study the dynamics of the Holling--Tanner predator-prey model with strong Allee effect on prey and functional response Holling Type II, that is \eqref{hta1} with $m>0$. The main difference between system \eqref{ht1} and \eqref{hta1} is the fact that \eqref{hta1} has at most two equilibrium points in the first quadrant instead of one for \eqref{ht1}. This additional equilibrium point gives rise to saddle-node bifurcations and Bogadonov--Takens bifurcations, and significantly enriches the dynamics of the system. This manuscript also extends some of the results obtained by Arancibia--Ibarra and Gonz\'alez--Olivares \cite{arancibia} for a modified Holling--Tanner model with $m=0$, that is, with a specific weak Allee effect.  The authors showed the existence of a stable limit cycle, so that both species vary periodically. The model with $m<0$ has not been studied in detail yet since the dynamics and analysis is more complicated as there are (at most) three equilibrium points in the first quadrant. In addition, it complements the results of the Holling--Tanner model \eqref{ht1} studied by Saez and Gonzalez--Olivares \cite{saez}. 

A Holling-Tanner model considering Allee effects and functional response Holling Type I has been studied by Gonz\'alez--Olivares {\em et al.\ } \cite{gonzalez6}. The authors showed that there exist a region in parameter space where the model has a stable limit cycle. A Holling-Tanner model considering a weak Allee effect and functional response Holling Type III has been studied by Tintinago--Ruz {\em et al.\ } \cite{tintinago}. The authors showed that the system, for certain system parameters, has one equilibrium point in the first quadrant. This equilibrium point can be unstable surrounded by a stable limit cycle. Pal {\em et al.\ } \cite{pal} incorporated a Beddington--DeAngelis functional response where the predation rate is  both predator and prey dependant and the prey grow rate is affected by a strong Allee effect.  In this article the authors showed a Hopf bifurcation in presence of delay and the stability of a limit cycle. Moreover, the Holling Type III and Beddington--DeAngelis functional response can not be reduced to Holling Type II response function as the analysis in both papers is based on the sigmoid form of the response function and, consequently, some of the system parameters are necessary positive in both papers. Therefore, the dynamics and analysis are significantly different. For instance, the Holling--Tanner model with Beddington--DeAngelis functional response and strong Allee effect only supports stable limit cycles and the origin is not singular \cite{pal}. In contrast, for \eqref{hta1} the origin is singular and there exists a system parameters where \eqref{hta1} has an unstable limit cycle, see for instance Figure \ref{fig:06} in Subsection \ref{nateq} in where we show this behaviour.  

The Holling--Tanner model with strong Allee effect is discussed further in Section~\ref{SS:MODEL} and a topological equivalent model is derived. In Section~\ref{SS:RES}, we study the main properties of the model. That is, we prove the stability of the equilibrium points and give the conditions for saddle-node bifurcations and Bogadonov--Takens bifurcations. We conclude the manuscript summarising the results and discussing the ecological implications.

\section{The Model}
\label{SS:MODEL}
The Holling--Tanner model with strong Allee effect is given by \eqref{hta1} with $m>0$, and for biological reasons we only consider the model in the domain $\Omega=\{(x,y)\in\mathbb{R}^2,x>0, y\geq0\}$ and $a<K$. The equilibrium points of system \eqref{hta1} are $(K,0)$, $(m,0)$, and $(x^*,y^*)$, this last point(s) being defined by the intersection of the nullclines $y=nx$ and $y=r(1-x/K)(x-m)(x+a)/q$.  In order to simplify the analysis, we follow \cite{blows, gonzalezy2,saez} and convert \eqref{hta1} to a topologically equivalent nondimensionalised model that has fewer parameters and that is no longer singular along $x=0$, see also \cite{harley,pettet}. 
\begin{theorem}\label{the1}
System \eqref{hta1} is topologically equivalent to system
\begin{equation}\label{hta2} 
\begin{aligned}
\dfrac{du}{d\tau} & =  \ u^2\left(\left(u+A\right)\left( 1-u\right)\left(u-M \right) -Qv\right) \,,\\
\dfrac{dv}{d\tau} & =  \ S\left(u+A\right)\left(u-v\right)v \,,
\end{aligned}
\end{equation}
in $\Omega$.
\end{theorem}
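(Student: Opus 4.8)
The plan is to exhibit an explicit diffeomorphic change of the state variables together with a strictly positive rescaling of time, and then to invoke the standard fact that rescaling time by a smooth, strictly positive factor on a region yields an orbitally --- hence topologically --- equivalent flow there.

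First I would nondimensionalise the populations by setting $x=Ku$ and $y=nKv$. This linear map is a diffeomorphism of the plane carrying $\Omega=\{x>0,\ y\ge 0\}$ onto $\{u>0,\ v\ge 0\}$, which I again denote $\Omega$. Substituting into \eqref{hta1}, dividing the first equation by $K$ and the second by $nK$, and writing $A=a/K$ and $M=m/K$ (so $m>0$ gives $M>0$ and $a<K$ gives $A<1$), one obtains
\[
\dfrac{du}{dt}=rK\,u(1-u)(u-M)-\dfrac{nq\,uv}{u+A}\,,\qquad
\dfrac{dv}{dt}=\dfrac{s\,v(u-v)}{u}\,.
\]
Both right-hand sides are smooth on $\Omega$, since $u>0$ there; it was precisely the term $v/u$ that made \eqref{hta1} singular along $x=0$, a set excluded from $\Omega$.

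Next I would introduce a new time $\tau$ via $d\tau=\dfrac{rK}{u(u+A)}\,dt$. On $\Omega$ the factor $rK/\big(u(u+A)\big)$ is smooth and strictly positive, so the reparametrised system has exactly the same orbits, traversed in the same direction, as the one above; hence the two are topologically equivalent on $\Omega$. Multiplying the first equation by $u(u+A)/(rK)$ cancels the denominator $u+A$ against the Holling term and factors out $u^2$, while multiplying the second equation by the same quantity cancels the $u$ in its denominator. Setting $Q=nq/(rK)$ and $S=s/(rK)$ and collecting terms reproduces \eqref{hta2} exactly. Composing the two steps gives a diffeomorphism in $(x,y)$ together with a strictly positive time rescaling taking orbits of \eqref{hta1} to orbits of \eqref{hta2} with orientation preserved, which is the desired topological equivalence in $\Omega$.

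I do not anticipate a genuine obstacle; the only point needing care is that the rescaling factor be positive and finite throughout $\Omega$ --- i.e.\ that $\Omega$ is chosen exactly so that $u>0$, keeping both the singularity of \eqref{hta1} at $x=0$ and the new factor $1/(u+A)$ outside the region. What remains is the routine algebra verifying that the transformed right-hand sides match \eqref{hta2} term by term, plus the bookkeeping remark that the correspondence $A=a/K$, $M=m/K$, $Q=nq/(rK)$, $S=s/(rK)$ collapses the seven parameters of \eqref{hta1} to the four of \eqref{hta2}, consistent with the three rescalings used.
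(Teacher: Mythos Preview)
Your proposal is correct and follows essentially the same route as the paper: the same linear rescaling $x=Ku$, $y=nKv$, the same definitions $A=a/K$, $M=m/K$, $Q=nq/(rK)$, $S=s/(rK)$, and the same time reparametrisation $d\tau=\dfrac{rK}{u(u+A)}\,dt$. The only cosmetic difference is that the paper packages the change of variables and time rescaling into a single diffeomorphism $\varphi:(u,v,\tau)\mapsto(x,y,t)$ and verifies orientation preservation via $\det D\varphi=nu(a+Ku)/r>0$, whereas you argue directly from positivity of the rescaling factor on $\Omega$; both justifications are equivalent here.
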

\begin{proof}
We introduce a change of variable and time rescaling, given by the function $\varphi :\breve{\Omega}\times\mathbb{R}\rightarrow \Omega\times\mathbb{R}$, where $\varphi(u,v,\tau)=(x,y,t)$ is defined by $x=Ku$, $y=nKv$, $d\tau =rK\,dt/(u(u+a/K))$ and $\breve{\Omega}=\{(u,v)\in \mathbb{R}^2, u>0, v\geq0\}$. Replacing $x=Ku$ and $y=nKv$ in \eqref{hta1}, we obtain
\[\begin{aligned}
\dfrac{du}{dt} & = rK\left(\left(1-u\right)\left(u-\dfrac{m}{K}\right)-\dfrac{nq }{rK\left(u+\dfrac{a}{K}\right) }v\right)u\,, \\ 
\dfrac{dv}{dt} & = s\left(1-\dfrac{v}{u}\right)v\,.
\end{aligned}\]
Next, we rescale time in such a way as to eliminate the singularities. That is, we rescale $rK\,dt = u(u+a/K)\,d\tau$ to obtain
\[\begin{aligned}
\dfrac{du}{d\tau} & = u^2\left(\left(u+\dfrac{a}{K}\right)\left( 1-u\right)\left(u-\dfrac{m}{K} \right) -\dfrac{nq}{rK}v\right)\,,\\
\dfrac{dv}{d\tau} & = \dfrac{s}{rK}\left(u+\dfrac{a}{K}\right)\left(u-v\right)v\,.
\end{aligned}\]
System (\ref{hta2}) is obtained upon defining $A:=a/K\in(0,1)$, $S:=s/(rK)$, $Q:=nq/(rK)$ and $M:=m/K\in(0,1)$, so $(A,M,S,Q)\in\Pi=(0,1)\times(0,1)\times \mathbb{R}^2_+$. In addition, the function $\varphi$ is a diffeomorphism preserving the orientation of time since $\det (\varphi(u,v,\tau))=nu(a+Ku)/r>0$ \cite{chicone}, see Figure \ref{dif}.
\end{proof}
\begin{figure}
\centering
\includegraphics[width=12cm]{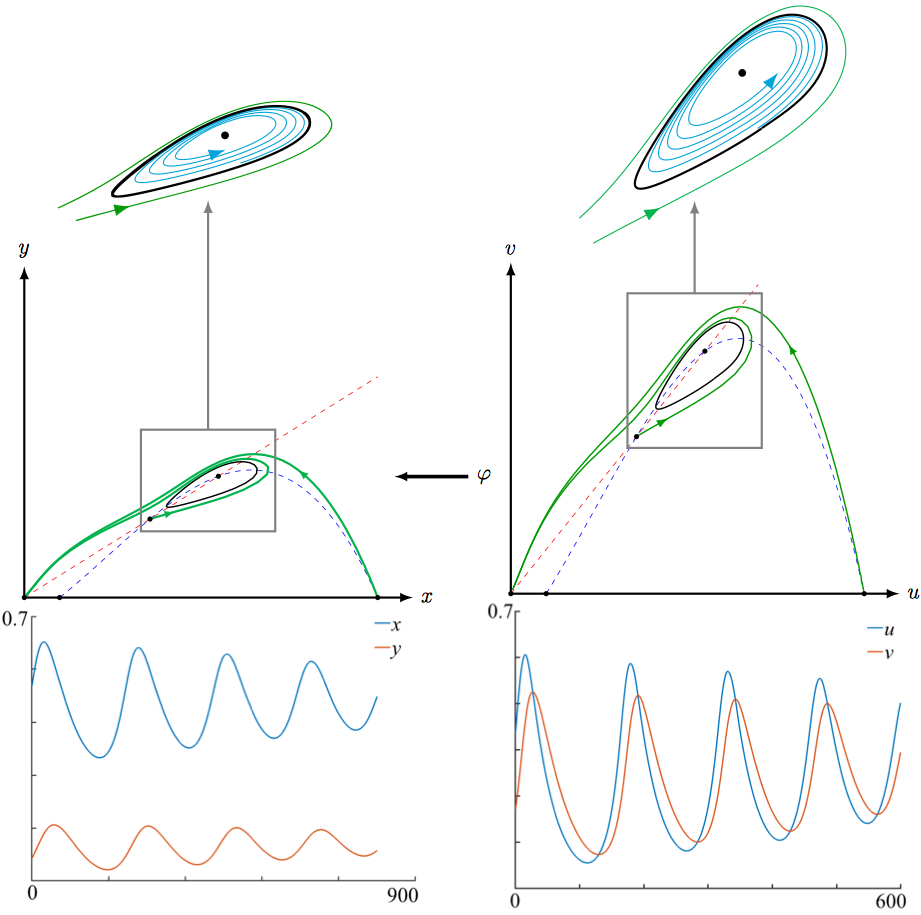}
\caption{The panels on the left represent the dynamics of system \eqref{hta1}. The panels on the right illustrate the topologically equivalent system \eqref{hta2} obtained via the diffeomorphism $\varphi$.}
\label{dif}
\end{figure}
So, instead of analysing system \eqref{hta1} we analyse the topologically equivalent, system \eqref{hta2}.
As $du/d\tau=uR(u,v)$ and $dv/d\tau=vW(u,v)$ with $R(u,v)=u(u+A)( 1-u)(u-M)-Quv$ and $W(u,v)=S(u+A)(u-v)$ system \eqref{hta2} is of Kolmogorov type. That is, the axes $u=0$ and $v=0$ are invariant. The $u$-nullclines of system \eqref{hta2} are $u=0$ and $v=(u+A)(1-u)(u-M)/Q$, while the $v$-nullclines are $v=0$ and $v=u$. Hence, the equilibrium points for system \eqref{hta2} are  $(0,0)$, $(1,0)$, $(M,0)$ and the point(s) $(u^*,v^*)$ with $v^*=u^*$ and where $u^*$ is determined by the solution(s) of
\begin{align}
\label{htae1}
\begin{aligned}
& (u+A)(1-u)(u-M)=Qu \,, \quad\text{or equivalently}\,,\\
& d(u)=u^3-(M+1-A)u^2-(A(M+1)-Q-M)u+AM =0\,.
\end{aligned}
\end{align}
Note that the equilibrium point at the origin in \eqref{hta2} corresponds a singular point in \eqref{hta1}. Define the functions $g(u)=(u+A)(1-u)(u-M)$ and $h(u)=Qu$ and observe that $\lim\limits_{u \rightarrow \pm \infty} g(u)= \mp \infty$ and $g(0)=-AM<0$. So, \eqref{htae1} will always have a single negative real root, which we denote by $u=-H$ where $H>0$, see Figure \ref{Fig.eqpoint}. From \eqref{htae1}, we now get that $Q=(H+1)(H+M)(A-H)/H$, and given that $Q>0$, we conclude that $A>H$. Factoring out $(u+H)$ from \eqref{htae1} leaves us with second order polynomial
\begin{align}
u^2-(H+M+1-A)u+\dfrac{AM}{H}=0\,,\label{htae3}	
\end{align} where $H+M+1-A>0$ since $A<1$. The roots of \eqref{htae3} are given by 
\begin{equation}\label{delta}
u_{1,2} = \frac12 \left(H+M+1-A \pm \sqrt{\Delta} \right)\,, \quad {\rm with}
\,\, \Delta=(H+M+1-A)^2-\frac{4AM}{H}\,,
\end{equation}
such that $M<u_1\leq E \leq u_2<1$, where $E=(H+M+1-A)/2$.

Modifying the parameter $Q$ impacts $\Delta$ (since $H$ depends on $Q$) and hence the number of positive equilibrium points. In particular, 
\begin{enumerate}[label=\roman*.]
\item if $\Delta<0$, then \eqref{hta2} has no equilibrium points in the first quadrant;
\item if $\Delta>0$, then \eqref{hta2} has two equilibrium points $P_{1,2}=(u_{1,2},u_{1,2})$ in the first quadrant; and
\item if $\Delta=0$, then \eqref{hta2} has one equilibrium point $(E,E)$ of order two in the first quadrant,
\end{enumerate}
see also Figure \ref{Fig.eqpoint}. Finally, observe that none of these equilibrium points explicitly depend on the system parameter $S$. 
Therefore, $S$ and $Q$ are the natural candidates to act as bifurcation parameters.

\begin{figure}
\centering
\includegraphics[width=7.5cm]{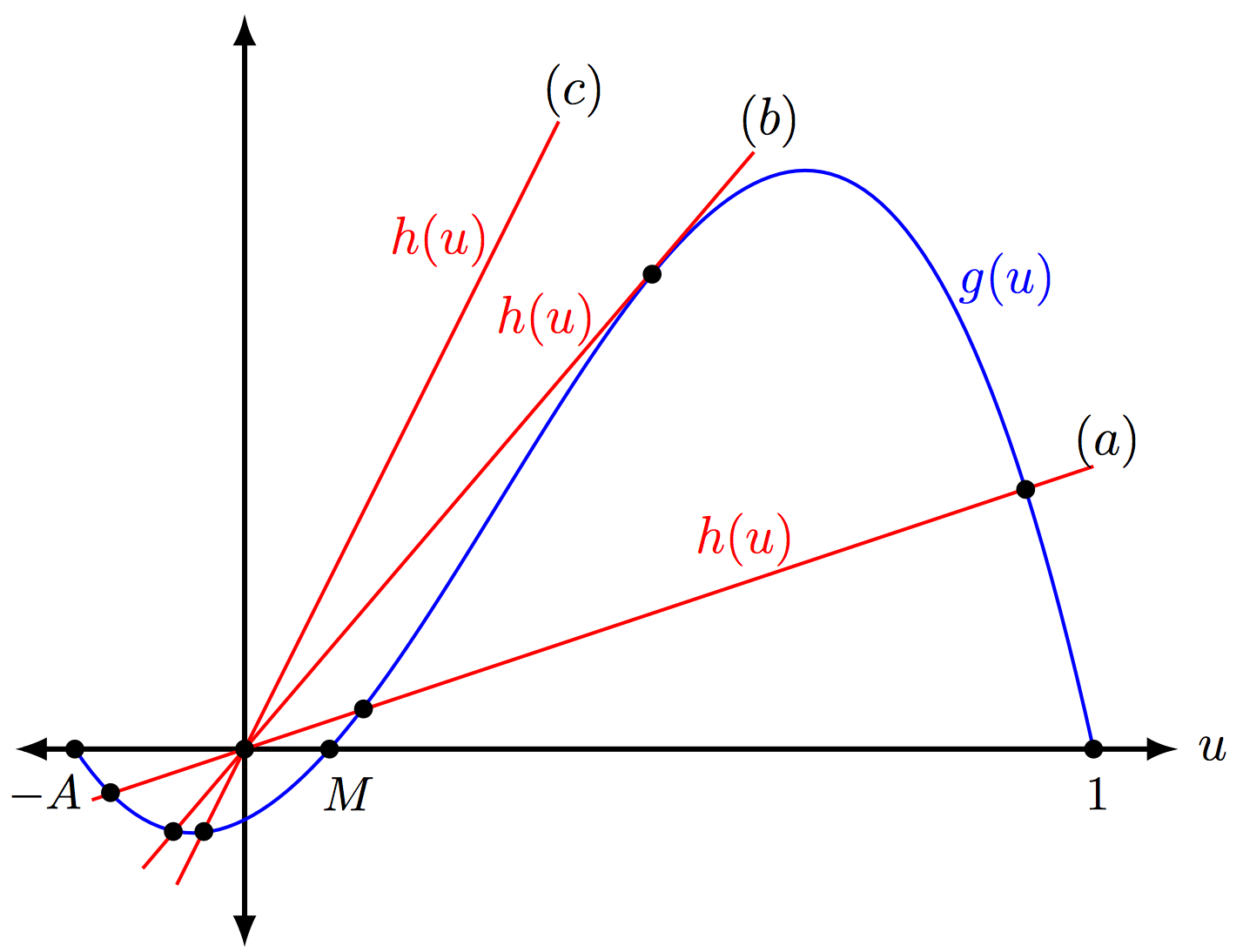}
\caption{The intersections of the function $g(u)$ (blue line) and the straight line $h(u)$ (red lines) for the three possible cases: (a) for $\Delta>0$ \eqref{delta} there are two distinct intersections (corresponding to the existence of two equilibrium points in the first quadrant); (b) for $\Delta=0$ there is a unique intersection (corresponding to the existence of a unique equilibrium point of order two); and (c) for $\Delta<0$ there are no intersections.}
\label{Fig.eqpoint}
\end{figure}

\section{Main Results}\label{SS:RES}

In this section, we discuss the stability of the equilibrium points of system \eqref{hta2} and their basins of attraction. 
\begin{theorem} \label{the2}
All solutions of \eqref{hta2} which are initiated in the first quadrant are bounded and eventually end up in $\Phi=\{(u,v),\ 0\leq u\leq1,\ 0\leq v\leq1\}$.
\end{theorem}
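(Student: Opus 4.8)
The plan is to establish three facts: (a) the first quadrant $\{u\ge 0,\ v\ge 0\}$ is positively invariant; (b) for each $\varepsilon\ge 0$ the box $\Phi_\varepsilon:=[0,1+\varepsilon]\times[0,1+\varepsilon]$ (so that $\Phi=\Phi_0$) is positively invariant; and (c) every forward orbit enters each $\Phi_\varepsilon$, $\varepsilon>0$, in finite time. Together these yield boundedness together with the asserted eventual confinement to $\Phi=\bigcap_{\varepsilon>0}\Phi_\varepsilon$.

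Fact (a) is immediate from the Kolmogorov structure already recorded after \eqref{hta2}: the axes $u=0$ and $v=0$ are invariant, so an orbit with $u(0)>0$, $v(0)\ge 0$ can never cross them. Fact (b) reduces to a sign check on the two nontrivial edges of $\partial\Phi_\varepsilon$. On $\{u=1+\varepsilon,\ 0\le v\le 1+\varepsilon\}$ one has $du/d\tau=(1+\varepsilon)^2\big(-(1+\varepsilon+A)\varepsilon(1+\varepsilon-M)-Qv\big)\le 0$ (for $\varepsilon=0$ this is $du/d\tau=-Qv\le 0$), and on $\{v=1+\varepsilon,\ 0\le u\le 1+\varepsilon\}$ one has $dv/d\tau=S(u+A)\big(u-(1+\varepsilon)\big)(1+\varepsilon)\le 0$ because $u\le 1+\varepsilon$. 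In both cases the vector field points weakly into $\Phi_\varepsilon$, and the rest of $\partial\Phi_\varepsilon$ lies on the invariant axes, so $\Phi_\varepsilon$ cannot be left.

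For fact (c) I would localise $u$ first and then $v$. On $\{u\ge 1+\varepsilon\}$ the middle factor $(1-u)$ of $g$ is negative and the other two factors are positive, so $(u+A)(1-u)(u-M)\le-(1+\varepsilon+A)\varepsilon(1+\varepsilon-M)<0$; combined with $-Qv\le 0$ and $u^2\ge(1+\varepsilon)^2$ this gives $du/d\tau\le-c_\varepsilon$ for some constant $c_\varepsilon>0$, so $u$ decreases below $1+\varepsilon$ in finite time and, by (b), remains there. Hence $\limsup_{\tau\to\infty}u(\tau)\le 1$, and there is a time $T$ with $u(\tau)\le 1+\varepsilon$ for $\tau\ge T$. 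Feeding this bound into the $v$-equation, on $\{v\ge 1+2\varepsilon,\ \tau\ge T\}$ we have $u-v\le-\varepsilon$, whence $dv/d\tau=S(u+A)(u-v)v\le-SA\varepsilon(1+2\varepsilon)<0$; so $v$ falls below $1+2\varepsilon$ in finite time and, by (b), stays there. Thus for every $\varepsilon>0$ the orbit is eventually contained in $\Phi_{2\varepsilon}$, which is boundedness together with the claimed confinement to $\Phi$.

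I expect the $v$-estimate in (c) to be the main obstacle: because the sign of $dv/d\tau$ is governed by $u-v$, the slab $\{v\le 1\}$ is not on its own a trapping region, so one must first pin down $u$ and only then transfer the estimate to the $v$-equation, taking care that the bounds are uniform on the relevant strips. A related point worth noting is that ``eventually end up in $\Phi$'' must be read asymptotically rather than as finite-time entry into $\Phi$ itself: an orbit started on the $u$-axis with $u(0)>1$ has $v\equiv 0$ and $u(\tau)\downarrow 1$ without ever attaining $u=1$, so it enters every $\Phi_\varepsilon$ in finite time but only reaches $\Phi$ in the limit $\tau\to\infty$; this is precisely why the argument is framed through the family $\Phi_\varepsilon$ and $\limsup$ bounds rather than through a single fixed trapping box.
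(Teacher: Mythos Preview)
Your proof is correct, and the overall strategy matches the paper's: first force $u$ down to (a neighbourhood of) $[0,1]$ using the sign of $(1-u)$ in the prey equation, then use the bound on $u$ together with the factor $(u-v)$ in the predator equation to force $v$ down. The implementations differ, however. The paper argues geometrically: it introduces the vertical strip $\Gamma=\{0\le u\le 1,\ v\ge 0\}$, splits the exterior $\{u>1\}$ according to whether $u>v$ or $v\ge u$, records the signs of $du/d\tau$ and $dv/d\tau$ in each piece, and concludes that orbits funnel first into $\Gamma$ and then (since $dv/d\tau<0$ on $\Gamma\setminus\Phi$) into $\Phi$. Your route is more quantitative: you work with the nested trapping boxes $\Phi_\varepsilon$ and extract uniform bounds $du/d\tau\le-c_\varepsilon$ and $dv/d\tau\le -SA\varepsilon(1+2\varepsilon)$ on the relevant regions, giving finite-time entry into each $\Phi_\varepsilon$. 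This buys you a clean treatment of the subtlety you flag in your final paragraph---the orbit on the $u$-axis with $u(0)>1$ converges to $(1,0)\in\partial\Phi$ without ever reaching $\Phi$---whereas the paper's phrase ``trajectories end up in $\Phi$'' leaves this asymptotic reading implicit.

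One small remark: when you write ``by (b), remains there'' after $u$ drops below $1+\varepsilon$, what you actually need is invariance of the full strip $\{0\le u\le 1+\varepsilon,\ v\ge 0\}$ rather than of the box $\Phi_\varepsilon$, since $v$ may still exceed $1+\varepsilon$ at that moment; but your edge computation for $du/d\tau$ on $\{u=1+\varepsilon\}$ is valid for all $v\ge 0$, so the argument stands without change.
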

\begin{proof}
First, observe that all the equilibrium points lie inside of $\Phi$. Additionally, as the system is of Kolmogorov type, the $u$-axis and  $v$-axis are invariant sets of \eqref{hta2}. Moreover, the set $\Gamma=\{(u,v),\ 0\leq u\leq1,\  v\geq0\}$ is an invariant region since $du/d\tau \leq0$ for $u=1$ and $v\geq0$. 
That is, trajectories entering into $\Gamma$ remain in $\Gamma$, see Figure \ref{Fig.bc}. 
If $u>1$ and $u>v$ we have that $du/d\tau < 0$ and $dv/d\tau>0$. Hence, trajectories inside this region enter into $\Phi$ or the region where $u>1$ and $v\geq u$, see $\Lambda$ and $\Theta$ in Figure \ref{Fig.bc}. 
If $u>1$ and $v \geq u$, we have $du/d\tau<0$ and $dv/d\tau \leq 0$. Hence, both the $u$-component and $v$-component of trajectories inside this region are non-increasing as time increases and enter into $\Gamma$ with $v>1$, see $\Theta$ in Figure \ref{Fig.bc}. Thus, all trajectories starting outside $\Gamma$ enter into $\Gamma$. Finally, for all $(u,v)\in\Gamma\backslash\Phi$  we have that $dv/d\tau<0$. Therefore, all trajectories end up in $\Phi$.
\end{proof}

\begin{figure}
\centering
\includegraphics[width=8cm]{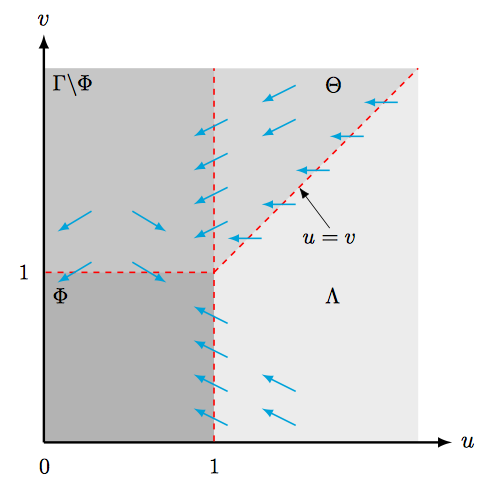}
\caption{Phase plane of system \eqref{hta2} and its invariant regions $\Phi$ and $\Gamma$.}
\label{Fig.bc}
\end{figure}

\subsection{The nature of the equilibrium points}\label{nateq}

To determine the nature of the equilibrium points we compute the Jacobian matrix $J(u,v)$ of \eqref{hta2}
\[J(u,v)=\begin{pmatrix}
u(ug'(u)+2(g(u)-Qv))  & -Qu^2 \\ 
Sv( A+2u-v)  &  S(u-2v)( A+u) 
\end{pmatrix},\]
where 
\begin{align}
\label{deri}
\begin{aligned}
& g(u)=(u+A)(1-u)(u-M) \, \quad\text{and}\,\\
& g'(u)=(1-u)(u-M)+(u+A)(1-u)-(u+A)(u-M)\,.
\end{aligned}
\end{align}
\begin{lemma}\label{eqax}
The equilibrium point $(1,0)$ is a saddle point and $(M,0)$ is a hyperbolic repeller.
\end{lemma}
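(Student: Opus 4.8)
The plan is to evaluate the Jacobian $J(u,v)$ at the two boundary equilibria $(1,0)$ and $(M,0)$ and read off the signs of the eigenvalues. At both points $v=0$, which kills the lower-left entry and makes the matrix upper triangular, so the eigenvalues are exactly the two diagonal entries; hence the computation reduces to sign-tracking, and the real work is just confirming those signs using the parameter constraints $A,M\in(0,1)$ and $A<1$.

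First I would treat $(1,0)$. The $(1,1)$ entry of $J$ is $u\bigl(u\,g'(u)+2(g(u)-Qv)\bigr)$; at $(1,0)$ we have $g(1)=0$ (since the factor $(1-u)$ vanishes), so this entry equals $g'(1)$. From \eqref{deri}, $g'(1)=(1-1)(1-M)+(1+A)(1-1)-(1+A)(1-M)=-(1+A)(1-M)<0$, because $A>0$ and $M<1$. The $(2,2)$ entry is $S(u-2v)(A+u)$, which at $(1,0)$ equals $S(1+A)>0$. Two real eigenvalues of opposite sign give a saddle point, as claimed.

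Next I would treat $(M,0)$. Here $g(M)=0$ (the factor $(u-M)$ vanishes), so the $(1,1)$ entry is $M\cdot M\,g'(M)=M^2 g'(M)$. Evaluating \eqref{deri} at $u=M$: the first and third terms vanish, leaving $g'(M)=(M+A)(1-M)>0$, so the $(1,1)$ entry is $M^2(M+A)(1-M)>0$. The $(2,2)$ entry at $(M,0)$ is $S(M)(A+M)=SM(A+M)>0$. Both eigenvalues are real and positive, so $(M,0)$ is a hyperbolic repeller (an unstable node).

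Since every entry and inequality above is elementary, there is no genuine obstacle; the only point requiring care is bookkeeping — making sure one uses the correct vanishing factor of $g$ at each equilibrium ($1-u$ at $u=1$, $u-M$ at $u=M$) and that the remaining factors are strictly signed under $A\in(0,1)$, $M\in(0,1)$, $S>0$. I would close by noting that both equilibria are hyperbolic (no zero eigenvalue), which is what justifies the local phase-portrait classification.
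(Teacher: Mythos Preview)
Your proposal is correct and follows essentially the same approach as the paper's own proof: both evaluate the Jacobian at the two boundary equilibria, observe that $v=0$ makes the matrix upper triangular, and read off the signs of the diagonal eigenvalues using $A,M\in(0,1)$ and $S>0$. The only cosmetic difference is that the paper additionally records the eigenvectors at each point (used elsewhere in the manuscript), which you omit since they are not needed for the lemma itself.
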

\begin{proof}
The Jacobian matrix evaluated at $(1,0)$ gives
\[ J(1,0)=\begin{pmatrix}
-(1-M)(A+1)  & -Q \\ 
0  &  S(A+1) 
\end{pmatrix},\]
which has eigenvalues \[\lambda_{(1,0)}^1=-(1-M)(A+1)<0\quad\text{and}\quad\lambda_{(1,0)}^2=S(A+1)>0\] and eigenvectors \[\psi_{(1,0)}^1=\left( 1 , 0 \right) ^T\quad\text{and}\quad\psi_{(1,0)}^2=\left( \dfrac{-Q}{(1-M+S)(A+1)},1 \right) ^T.\] Similarly, the Jacobian matrix evaluated at $(M,0)$ gives
\[J(M,0)=\begin{pmatrix}
M^2(1-M)(A+M)  & -QM^2 \\ 
0  &  MS(A+M) 
\end{pmatrix},\]
which has eigenvalues \[\lambda_{(M,0)}^1=M^2(1-M)(A+M)>0\quad\text{and}\quad\lambda_{(M,0)}^2=MS(A+M)>0\] and eigenvectors \[\psi_{(M,0)}^1=\left(1,0\right)^T\quad\text{and}\quad\psi_{(M,0)}^2=\left( \dfrac{MQ}{(M(1-M)-S)(A+M)}, 1 \right)^T.\]
		
Since $M<1$, it follows that $(1,0)$ is a saddle point and $(M,0)$ is a hyperbolic repeller in system \eqref{hta2}.
\end{proof}

\begin{lemma}\label{lemm2}
The origin $(0,0)$ in system \eqref{hta2} is a non-hyperbolic attractor. Furthermore, if there are no positive equilibrium points in the first quadrant, i.e. for $\Delta<0$ \eqref{delta}, then $(0,0)$ is globally asymptotically stable in the first quadrant.
\end{lemma}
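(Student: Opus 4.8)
The plan is to treat the two assertions separately, exploiting that $J(0,0)$ is the zero matrix — so the origin is non-hyperbolic and linearisation is useless — and replacing it by an invariant-region argument. For the attractor property I would fix any $\varepsilon\in(0,M)$ and consider the box $N_\varepsilon=[0,\varepsilon]\times[0,\varepsilon]$. On the edge $u=\varepsilon$, $0\le v\le\varepsilon$, one has $du/d\tau=\varepsilon^2\big(g(\varepsilon)-Qv\big)<0$ since $\varepsilon<M$ forces $g(\varepsilon)=(\varepsilon+A)(1-\varepsilon)(\varepsilon-M)<0$; on the edge $v=\varepsilon$, $0\le u\le\varepsilon$, one has $dv/d\tau=S(u+A)(u-\varepsilon)\varepsilon\le0$; and the edges $u=0$, $v=0$ are invariant. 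Hence $N_\varepsilon$ is positively invariant.

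Inside $N_\varepsilon$ we have $g(u)<0$ and $-Qu^2v\le0$, so $du/d\tau\le0$, with strict inequality whenever $u>0$; thus $u$ serves as a Lyapunov-type function and, by a LaSalle argument, the $\omega$-limit set of any trajectory in $N_\varepsilon$ lies in $\{du/d\tau=0\}\cap N_\varepsilon$. Because $g(u)/Q<0$ for $0<u\le\varepsilon$, that set is exactly the segment $\{u=0\}\cap N_\varepsilon$, on which the dynamics reduce to $dv/d\tau=-SAv^2$; the only bounded complete orbit of this scalar equation in $[0,\varepsilon]$ is the point $v=0$, so every trajectory in $N_\varepsilon$ converges to $(0,0)$. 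Since $\varepsilon$ can be chosen so that $N_\varepsilon$ lies in any prescribed neighbourhood of the origin, this also gives Lyapunov stability, and hence $(0,0)$ is a non-hyperbolic asymptotically stable equilibrium.

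For the global statement I would assume $\Delta<0$, so that $(0,0)$, $(M,0)$, $(1,0)$ are the only equilibria and all lie in the compact positively invariant set $\Phi$ of Theorem~\ref{the2}, into which every trajectory of the first quadrant eventually enters. I would then apply the Poincar\'e--Bendixson theorem: the $\omega$-limit set of such a trajectory is an equilibrium, a periodic orbit, or a polycycle. No periodic orbit exists — the invariant $u$-axis is one-dimensional and carries none, so a periodic orbit would lie in the open first quadrant and, by an index argument, would enclose an equilibrium there, which is impossible for $\Delta<0$. No polycycle exists either: $(M,0)$ is a repeller (Lemma~\ref{eqax}), so no orbit has it as an $\omega$-limit and it cannot be a vertex of a polycycle; the first part shows $(0,0)$ is asymptotically stable, so no orbit has it as an $\alpha$-limit and it too cannot be a vertex; and $(1,0)$ alone cannot close a loop, because its unstable manifold enters the open first quadrant while its stable manifold lies on the invariant $u$-axis, so the two cannot meet. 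Therefore every $\omega$-limit set is a single equilibrium; it is not $(M,0)$ (a repeller), and it is not $(1,0)$, since by Lemma~\ref{eqax} the stable manifold of the saddle $(1,0)$ is tangent to the $u$-axis and, the $u$-axis being invariant, equals $\{(u,0):u>M\}$, which is disjoint from the interior of the first quadrant. Hence every trajectory starting in the interior of the first quadrant converges to $(0,0)$, which together with the stability from the first part yields global asymptotic stability.

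The step I expect to be the main obstacle is handling the complete degeneracy of the linearisation at the origin: stability cannot be read off from $J(0,0)=0$, so the attractor property must be extracted by hand from the invariant box $N_\varepsilon$ and the reduction to the scalar equation on $\{u=0\}$; in the global part the delicate point is the exclusion of polycycles, which relies on the precise hyperbolic nature of $(M,0)$ and $(1,0)$ and on the location of the manifolds of $(1,0)$ relative to the invariant axes.
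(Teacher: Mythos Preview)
Your proof is correct, and for the local statement it takes a genuinely different route from the paper. The paper desingularises the origin via the directional blow-up $(u,v)\mapsto(xy,y)$ with time rescaling $\tau\mapsto t/y$, finds that the blown-up system has a hyperbolic saddle at $(0,0)$ and a hyperbolic attractor at $(\mu,0)=\bigl(S/(S+M),0\bigr)$ on the exceptional divisor, and reads off attractivity of the origin upon blowing down. Your invariant-box argument with $N_\varepsilon=[0,\varepsilon]^2$ for $\varepsilon\in(0,M)$, combined with the LaSalle reduction to the scalar equation $dv/d\tau=-SAv^2$ on $\{u=0\}$, is more elementary and entirely avoids the blow-up machinery; it exploits directly that $g(u)<0$ on $[0,M)$, which is really all that is needed for asymptotic stability. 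The trade-off is that the blow-up gives finer information about the local phase portrait near the origin (the curve $\Gamma^s$ and the sectorial decomposition sketched in the paper's Figure~4), whereas your argument yields only stability and attraction.

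For the global statement both you and the paper invoke the Poincar\'e--Bendixson theorem after Theorem~\ref{the2}, but your treatment is more complete: the paper simply asserts that the origin is the unique $\omega$-limit, while you explicitly exclude periodic orbits via the index argument (no interior equilibrium to enclose when $\Delta<0$) and exclude polycycles by checking each boundary equilibrium --- $(M,0)$ is a repeller, $(0,0)$ is an attractor (hence neither can be a vertex), and $(1,0)$ cannot support a homoclinic loop because its stable manifold is the invariant $u$-axis and is therefore unreachable from the open first quadrant. These are precisely the verifications the paper's one-line appeal to Poincar\'e--Bendixson leaves implicit.
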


\begin{proof}
First, we observe that after setting $u=0$ in system \eqref{hta2} the second equation becomes $dv/dt=-v^2AS<0$ for $v>0$. That is, any trajectory starting along the positive $v$-axis converges to the origin $(0,0)$. The Jacobian matrix evaluated at the origin reduces to the zero matrix. Hence, the origin $(0,0)$ is a non-hyperbolic equilibrium point of system \eqref{hta2}. We use the \emph{blow-up} method to desingularised the origin and study the dynamics of this blown-up equilibrium point. More specifically, we consider the method used in Dumortier {\em et al.\ } \cite{dumortier} and introduce, with slight abuse of notation, the transformation
\begin{equation}
(u,v)\to (xy,y)\,,\label{eq:vbu2}
\end{equation}and the time rescaling
\begin{equation}\label{eq:cvtr2}
\tau \to \dfrac{t}{y}\,.
\end{equation}
We omitted the {\emph{blow-up in the $x$-direction}} \cite{dumortier} since it does not give any further information. Transformation \eqref{eq:vbu2} is well defined for all values of $u$ and $v$ except for $v=0$ and blows-up the origin into the entire $x$-axis. In these new coordinates, \eqref{hta2} becomes
\begin{equation}\label{eq:0sys2}
\begin{aligned}
\dfrac{dx}{dt}&=x \left(S (1-x) (A + x y) + x (M - x y) (x y-1) (A + x y)-Q x y)\right)\,,\\
\dfrac{dy}{dt}&=S (x-1)(xy+A)y\,.
\end{aligned}
\end{equation}
System \eqref{eq:0sys2} has two equilibrium points on the nonnegative $x$-axis: the origin $(0,0)$ and a second equilibrium point $(\mu,0)$ with $\mu=S/(S+M)$. The corresponding Jacobian matrix of \eqref{eq:0sys2} evaluated at $(0,0)$ gives
\begin{equation*} 
J(0,0)=\begin{pmatrix}
AS & 0 \\
0     & -AS
\end{pmatrix},
\end{equation*} 
which has eigenvalues $\lambda_{(0,0)}^1=AS>0$ and $\lambda_{(0,0)}^2=-AS<0$, and corresponding eigenvectors $\phi_{(0,0)}^1=(0,1)^T$ and $\phi_{(0,0)}^2=(1,0)^T$. Hence, the origin is a saddle point in system \eqref{eq:0sys2}. Similarly, the corresponding Jacobian matrix of \eqref{eq:0sys2} evaluated at $(\mu,0)$ gives
\begin{equation*}
J(\mu,0)=\begin{pmatrix}
-AS &  \dfrac{S^2(AS(1+M)-Q(M+S)}{(M+S)^3}\\[3mm]
0 &- \dfrac{AMS}{M+S}
\end{pmatrix},
\end{equation*}
with eigenvalues $\lambda_{(\mu,0)}^1=-AS<0$ and $\lambda_{(\mu,0)}^2=-AMS/(M+S)<0$, and corresponding eigenvectors $\phi_{(\mu,0)}^1=\left( (AS(1+M)-Q(M+S))/(A(M+S)^2), 1\right) ^T$ and $\phi_{(\mu,0)}^2=(1,0)^T$. Hence, $(\mu,0)$ is an attractor in system \eqref{eq:0sys2}. A branch of the stable eigenvector $\phi_{(\mu,0)}^1$ is in the half-plane $y>0$, as illustrated in the left panel of Figure \ref{fig:022}, while the other eigenvectors are the axes. 
	
Taking the  inverse of \eqref{eq:vbu2}, the line $y=0$, including the equilibrium point $(\mu,0)$, collapses to the origin $(0,0)$ of \eqref{hta2}, the line $x=0$ is mapped to the line  $u=0$, and $\phi_{(\mu,0)}^1$ is locally mapped to a curve $\Gamma^s$, see Figure \ref{fig:022}. Since the orientation of the orbits in the first quadrant is preserved by  \eqref{eq:vbu2} and \eqref{eq:cvtr2}, it follows that the origin $(0,0)$ is a non-hyperbolic attractor of \eqref{hta2}.
	
Finally, by Theorem~\ref{the2} we have that solutions starting in the first quadrant are bounded and eventually end up in the invariant region $\Gamma$. Moreover, the equilibrium point $(1,0)$ is a saddle point and, if $\Delta<0$ \eqref{delta}, there are no equilibrium points in the interior of the first quadrant. Thus, by the Poincar\'e--Bendixson Theorem the unique $\omega$-limit of all the trajectories is the origin, see Figure \ref{fig:04}.
\end{proof}
	
\begin{figure}
\centering
\includegraphics[width=12cm]{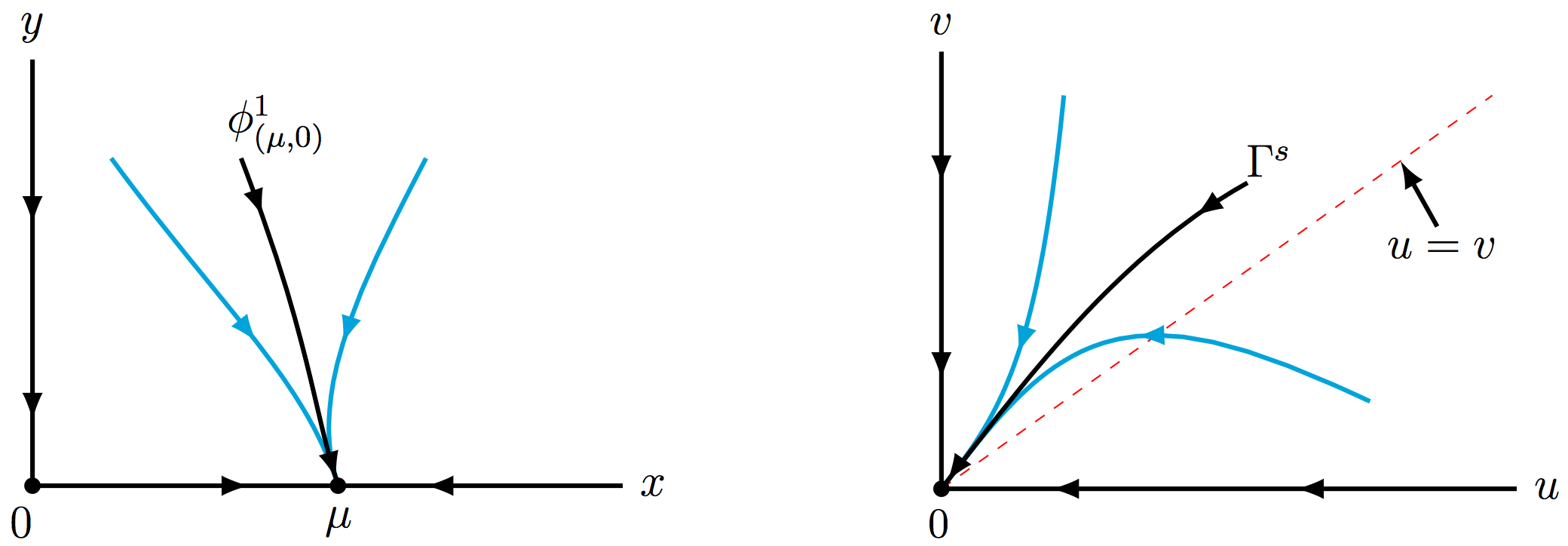}
\caption{The panel on the left illustrates the blow-up in the $y$-direction of the origin. The panel on the right illustrates the blow-down of this blow up.}
\label{fig:022}
\end{figure}

\begin{figure}
\centering
\includegraphics[width=6.5cm]{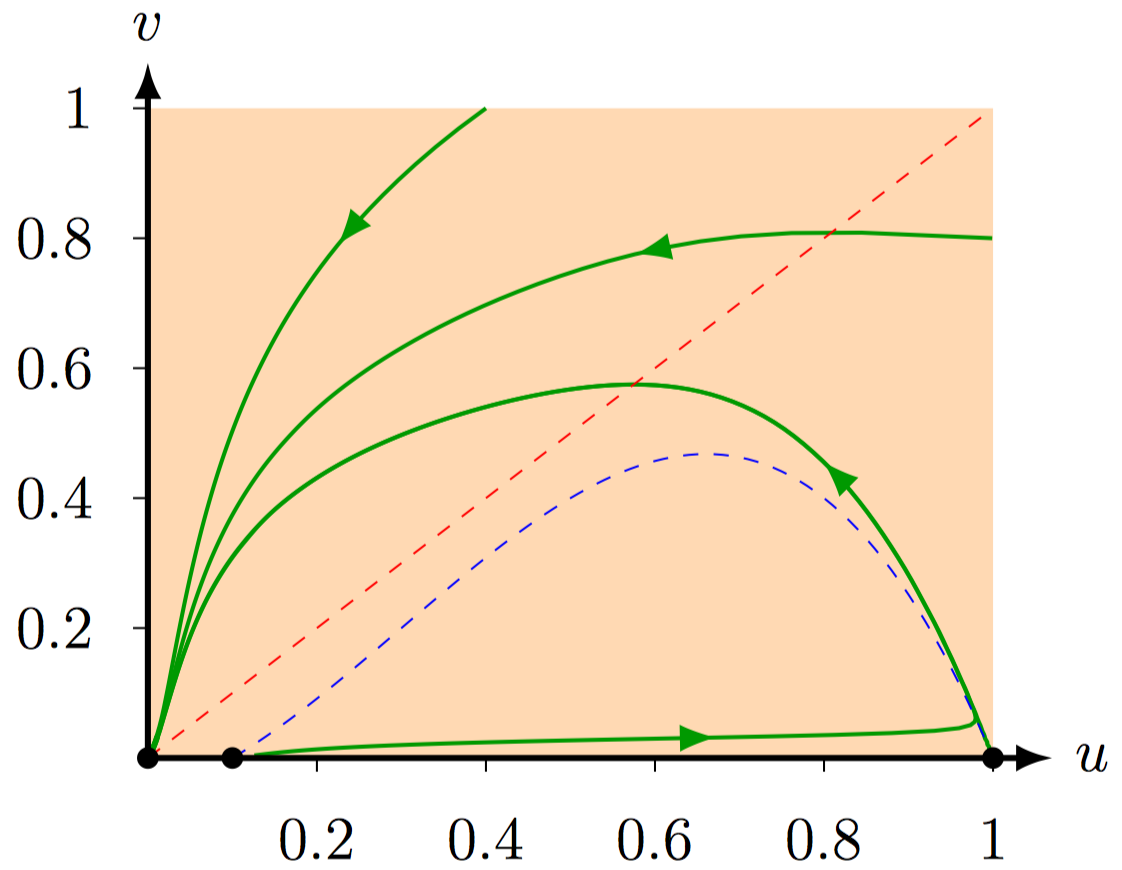}
\caption{For $M=0.1$, $A=0.2$, $Q=0.35$, and $S=0.1$, such that $\Delta<0$ \eqref{delta}, the equilibrium point $(0,0)$ is a global attractor. The dashed blue (red) curve represents the prey (predator) nullcline.}
\label{fig:04}
\end{figure}

\subsubsection{$\Delta > 0$}
Next, we consider the stability of the two positive equilibrium points $P_{1,2}$ of system \eqref{hta2} in the interior of $\Phi$. These equilibrium points lie on the curve $u=v$ such that $g(u)=Qu$ \eqref{deri},  and they only exist if the system parameters are such that $\Delta>0$ \eqref{delta}. The Jacobian matrix of system \eqref{hta2} at these equilibrium points becomes
\begin{align}
\label{JP1}
J(P_{1,2})=\begin{pmatrix}
u_{1,2}^2g'(u_{1,2})   & -Qu_{1,2}^2 \\ 
Su_{1,2}(A+u_{1,2})  &  -Su_{1,2}(A+u_{1,2}) 
\end{pmatrix}.\end{align}
The determinant of $J(P_{1,2})$ is given by
\begin{align*}
\det(J(P_{1,2})) =&Su_{1,2}^3(A+u_{1,2})(Q-g'(u_{1,2}))\\
=&-Su_{1,2}^3(A+u_{1,2})((-H+M+1-A)u_{1,2}-2u_{1,2}^2\\
&+H(H+M+1-A))\\
=&Su_{1,2}^3(A+u_{1,2})(H+u_{1,2})(-H-M-1+A+2u_{1,2}),
\end{align*}

where we used \eqref{htae1} in the first step and \eqref{htae3} in the last step. Additionally, the trace of the Jacobian  matrix is
\begin{align*}
\begin{aligned}
\tr(J(P_{1,2})) &=u_{1,2}(u_{1,2}g'(u_{1,2})-S(A+u_{1,2}))\\
&=u_{1,2}(u_{1,2}+A)\left( \dfrac{u_{1,2}g'(u_{1,2})}{(A+u_{1,2})}-S\right)\\
&=u_{1,2}(u_{1,2}+A)\left( f(u_{1,2})-S\right),
\end{aligned}
\end{align*}
where 
\begin{equation}\label{eq:fu}
f(u_{1,2}):=\dfrac{u_{1,2}g'(u_{1,2})}{(A+u_{1,2})}.
\end{equation} 
Thus, the sign of the determinant depends on the sign of $-H-M-1+A+2u_{1,2}$, while the sign of the trace depends on the sign of $f(u_{1,2})-S$. This gives the following results.
\begin{lemma}\label{p1}
Let the system parameters of \eqref{hta2} be such that $\Delta>0$ \eqref{delta}. Then, the equilibrium point $P_1$ is a saddle point.
\end{lemma}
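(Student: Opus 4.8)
The plan is to read off the sign of $\det(J(P_1))$ from the factorised expression already derived just above the statement, namely
\[
\det(J(P_{1,2})) = S\,u_{1,2}^3\,(A+u_{1,2})\,(H+u_{1,2})\,\bigl(-H-M-1+A+2u_{1,2}\bigr),
\]
and to argue that a strictly negative determinant forces the two eigenvalues to be real with opposite signs, which is exactly the definition of a (hyperbolic) saddle point.

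First I would rewrite the last factor as $-H-M-1+A+2u_{1,2} = 2\bigl(u_{1,2} - E\bigr)$ with $E = (H+M+1-A)/2$, the midpoint of the two roots $u_{1,2}$ of \eqref{htae3}. Since $\Delta>0$, the roots are distinct and $u_1 = E - \tfrac12\sqrt{\Delta} < E$, so this factor is strictly negative at $P_1$. Next I would check that every other factor is strictly positive: $S>0$ by the parameter constraints $(A,M,S,Q)\in\Pi$; $u_1>0$ because $M<u_1$ (established right after \eqref{delta}) and $M\in(0,1)$; hence $u_1^3>0$ and $A+u_1>0$; and $H+u_1>0$ since $H>0$. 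Multiplying these signs gives $\det(J(P_1))<0$.

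I would then conclude: a $2\times2$ real matrix with negative determinant has eigenvalues $\lambda_{1,2}$ with $\lambda_1\lambda_2 = \det < 0$, so they are real and of opposite sign; therefore $P_1$ is a hyperbolic saddle point of \eqref{hta2}, independently of the value of $\operatorname{tr}(J(P_1))$ and hence of $f(u_1)-S$.

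There is essentially no genuine obstacle here — the work has already been done in assembling the factored determinant; the only thing to be careful about is confirming that all the positivity claims ($S>0$, $u_1>0$, $H>0$, $A+u_1>0$) are genuinely available from what precedes, and that $\Delta>0$ (not just $\Delta\geq 0$) is used so that $u_1<E$ strictly, ruling out a degenerate zero eigenvalue. The only mild subtlety worth a sentence is noting that the sign of the trace is irrelevant for the saddle classification, which is why $P_1$ can be treated separately from $P_2$.
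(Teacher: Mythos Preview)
Your argument is correct and essentially identical to the paper's: both evaluate the last factor of the factored determinant at $u_1$ to obtain $-\sqrt{\Delta}<0$ (you via $2(u_1-E)$, the paper by direct substitution), and conclude $\det(J(P_1))<0$ so $P_1$ is a saddle. Your additional explicit checks of the positivity of the remaining factors and the remark that the trace is irrelevant are welcome but do not change the approach.
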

\begin{proof}
Evaluating $-H-M-1+A+2u$ at $u_1$ gives:\\
\[\begin{aligned}
-H-M-1+A+2u_1&=-H-M-1+A+(H+M+1-A-\sqrt{\Delta})\\
&=-\sqrt{\Delta}<0.
\end{aligned}\]
Hence, $\det(J(P_1))<0$ and $P_1$ is thus a saddle point.
\end{proof}
\begin{lemma}\label{lemm4}
Let the system parameters be such that $\Delta>0$ \eqref{delta}. Then, the equilibrium point $P_2$ is:
\begin{enumerate}[label=\roman*.]
\item a repeller if $0<S<S^*:=f(u_2)$; and
\item an attractor if $S>S^*$,
\end{enumerate}
with $f$ defined in \eqref{eq:fu}. 
\end{lemma}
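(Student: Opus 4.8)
The plan is to classify $P_2=(u_2,u_2)$ directly from the signs of $\det J(P_2)$ and $\tr J(P_2)$, using the factorised expressions for $\det(J(P_{1,2}))$ and $\tr(J(P_{1,2}))$ obtained just before Lemma~\ref{p1}. Since Lemma~\ref{p1} already handles $P_1$ through the factor $-H-M-1+A+2u_1=-\sqrt{\Delta}<0$, the natural first step is to carry out the mirror computation at $u_2$: from \eqref{delta} one has $2u_2=H+M+1-A+\sqrt{\Delta}$, hence
\[
-H-M-1+A+2u_2=\sqrt{\Delta}>0 .
\]
Because $P_2$ lies in the interior of $\Phi$ we have $u_2\in(M,1)\subset(0,1)$, and together with $A>0$, $H>0$ and $S>0$ this makes every factor in
\[
\det(J(P_2))=S\,u_2^{3}(A+u_2)(H+u_2)\,(-H-M-1+A+2u_2)
\]
strictly positive. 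So $\det(J(P_2))>0$; in particular $P_2$ is never a saddle, and its type is governed entirely by the trace.

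Next I would invoke $\tr(J(P_2))=u_2(u_2+A)\,(f(u_2)-S)$ with $f$ as in \eqref{eq:fu}. Since $u_2>0$ and $u_2+A>0$, the sign of the trace equals the sign of $f(u_2)-S=S^*-S$. The conclusion then follows from the standard classification of hyperbolic equilibria of planar vector fields: if $0<S<S^*$ then $\tr(J(P_2))>0$ and $\det(J(P_2))>0$, so both eigenvalues have positive real part and $P_2$ is a repeller; if $S>S^*$ then $\tr(J(P_2))<0$ and $\det(J(P_2))>0$, so both eigenvalues have negative real part and $P_2$ is an attractor.

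I do not expect a genuine obstacle here: the determinant and trace have already been computed and factorised, so the argument reduces to a sign check once \eqref{delta} is substituted. The only points worth spelling out carefully are (i) that all five factors of $\det(J(P_2))$ are positive --- which is where $u_2\in(0,1)$, $A>0$, $H>0$ and $\Delta>0$ are used --- and (ii) that the borderline value $S=S^*$, at which $\tr(J(P_2))=0$ while $\det(J(P_2))>0$, so that $P_2$ is a non-hyperbolic center-type point where a Hopf bifurcation is expected, is intentionally excluded from the statement and treated separately.
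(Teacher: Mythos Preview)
Your proposal is correct and follows essentially the same approach as the paper: evaluate $-H-M-1+A+2u$ at $u_2$ to obtain $\sqrt{\Delta}>0$ and hence $\det(J(P_2))>0$, then read off the sign of the trace from the factor $f(u_2)-S$. You are in fact a bit more explicit than the paper in checking the positivity of all the factors and in invoking the trace--determinant classification, but the argument is the same.
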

\begin{proof}
Evaluating $-H-M-1+A+2u$ at $u=u_2$ gives\\
\[\begin{aligned}
-H-M-1+A+2u_2&=-H-M-1+A+(H+M+1-A+\sqrt{\Delta})\\
&=\sqrt{\Delta}>0
\end{aligned}.\]
Hence, $\det(J(P_2))>0$. Evaluating $f(u)-S$ at $u=u_2$ gives
\[\begin{aligned}
f(u_2)-S=& \dfrac{u_{2}g'(u_{2})}{(A+u_{2})}-S,\\
\end{aligned}\]
with $g'(u_2)$ defined in \eqref{deri}. Therefore, the sign of the trace, and thus the behaviour of $P_2$ depends on the parity of $f(u_2)-S$ (and recall that $u_2$ does not depend explicitly on $S$). 
\end{proof}  

In conclusion, for system parameters $(Q,M,A)$ such that $\Delta>0$ and for $S>S^*$, system \eqref{hta2} has two attractors, namely $(0,0)$ and $P_2$. Furthermore, at the critical value $S=S^*$, such that $\tr(J(P_2))=0$, $P_2$ undergoes a Hopf bifurcation \cite{chicone}. Note that $S^*$ depends on $Q$ and it can actually be negative. In that  case $P_2$ is an attractor for all $S>0$ (and as long as $\Delta>0$), see also Figures~\ref{fig:03} and \ref{summ}.

Next, we discuss the basins of attraction of the attractors $(0,0)$ and $P_2$ (for $S>S^*$) in $\Phi$ (see Theorem~\ref{the2}). The stable manifold of the saddle point $P_1$, $W^s(P_1)$, often acts as a separatrix curve between these two basins of attraction. The eigenvalues of the associated Jacobian matrix of $P_1$ \eqref{JP1} are given by
\[\lambda_{P_1}^{u,s}   = \dfrac{u_1(A+u_1)}{2}\left(f(u_1)-S\pm \dfrac{\sqrt{p(u_1)}}{(A+u_1)}\right)\,,\]
where $p(u_1)=(A+u_1)(f(u_1)+2Su_1(f(u_1)-2Q)+(S^2+f^2(u_1))(u_1+A))$ and the (un)stable eigenvectors
\[\psi_{P_1}^{u,s}=\left( \left( 1+\dfrac{\lambda_{P_1}^{u,s}}{Su_1(A+u_1)}\right) ,1\right) ^T\,.\]
Let $W^{u,s}_{\nearrow}(P_1)$ be the (un)stable manifold of $P_1$ associated to the eigenvector $\psi_{P_1}^{u,s}$ that goes up to the right (from $P_1$) and let $W^{u,s}_{\swarrow}(P_1)$ be the (un)stable manifold of $P_1$ associated to the eigenvector $\psi_{P_1}^{u,s}$ that goes down to the left (from $P_1$), see Figure \ref{fig:man}. From the phase plane and the nullclines of system \eqref{hta2} it immediately follows that $W^s_{\nearrow}(P_1)$ is connected with $(M,0)$ and $W^u_{\swarrow}(P_1)$ with $(0,0)$. Furthermore, everything in between of $W^{s}_{\nearrow}(P_1)$, $W^{u}_{\swarrow}(P_1)$ and the $u$-axis also asymptotes to the origin, see Figure \ref{fig:man}. 

\begin{figure}
\centering
\includegraphics[width=7cm]{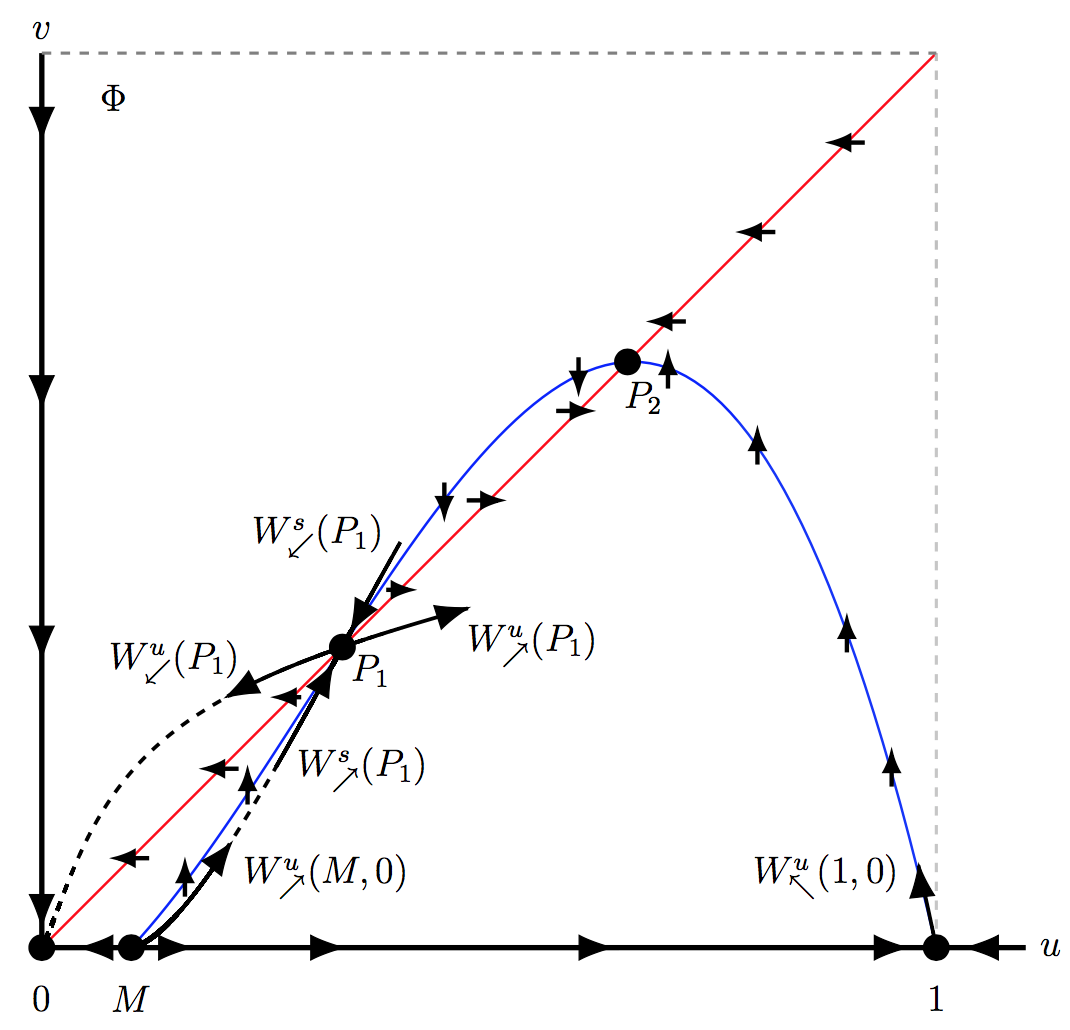}
\caption{Stable and unstable manifolds of $P_1$.}
\label{fig:man}
\end{figure}

For $\Delta>0$ and depending on the value of $S$, there are six different cases for the boundary of the basins of attraction in the invariant region $\Phi$, see Theorem~\ref{the2}. By continuity of the vector field in $S$, see \eqref{hta2}, we get:
\begin{enumerate}[label=\roman*.]
\item For $0<S\leq S^*=f(u_2)$, the equilibrium point $P_2$ is unstable, see lemma \ref{lemm4}, and $W^s_{\swarrow}(P_1)$ connects with $P_2$. Hence, $\Phi$ is the basin of attraction of $(0,0)$, see Figure \ref{fig:07}.
	
\begin{figure}
\centering
\includegraphics[width=6.5cm]{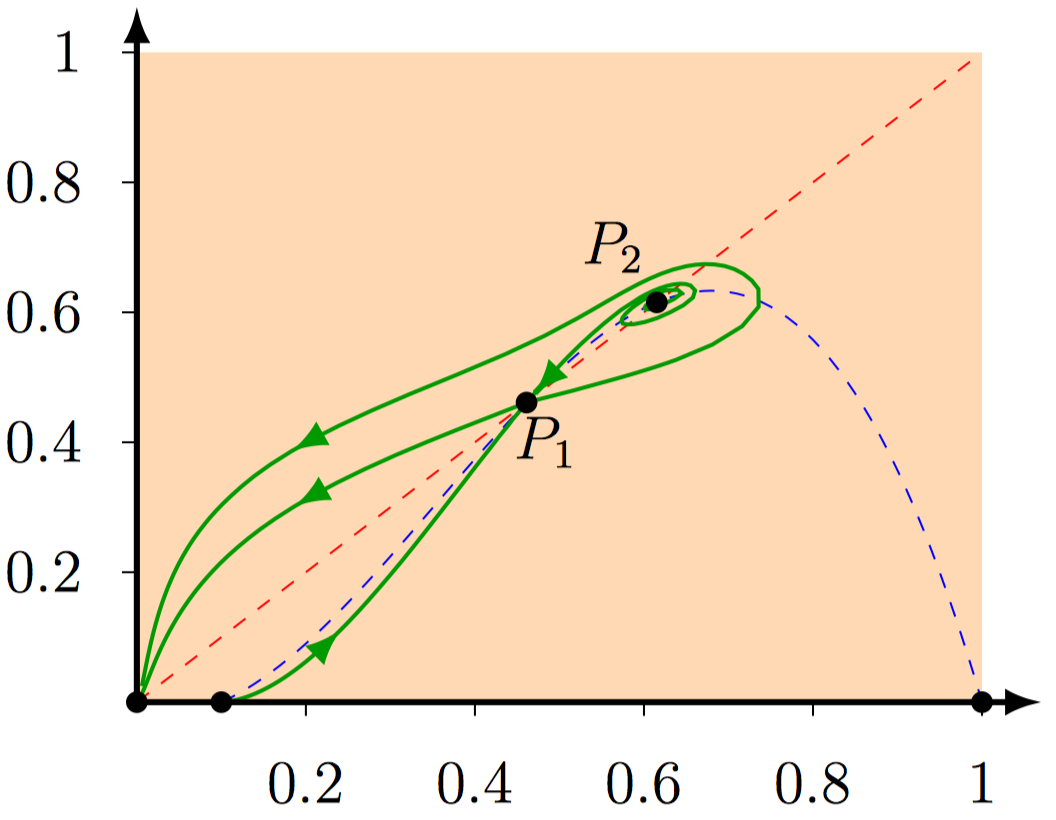}
\caption{For $M=0.1$, $A=0.0365$, $Q=0.21$, and $S=0.0123$, such that $\Delta>0$ \eqref{delta} and $S<S^*$, the equilibrium point $(0,0)$ is a global attractor. The blue (red) curve represents the prey (predator) nullcline.}
\label{fig:07}
\end{figure}

\item For $S^*<S<S^{**}$, there is an unstable limit cycle that surrounds $P_2$ and $W^s_{\swarrow}(P_1)$ connects with this limit cycle. This limit cycle is created around $P_2$ via the Hopf bifurcation \cite{gaiko} and terminates via a homoclinic bifurcation at $S=S^{**}$. 
Therefore, the limit cycle acts as a separatrix curve between the basins of attraction of $P_2$ and $(0,0)$ in this parameter regime, see Figure \ref{fig:06}(a).
\item  For $S=S^{**}$, $W^s_{\swarrow}(P_1)$ connects with $W^u_{\nearrow}(P_1)$ generating an homoclinic curve. This curve is the separatrix curve between the basins of attraction of $(0,0)$ and $P_2$ ($P_2$ is an attractor since $S>S^*$), see Figure \ref{fig:06}(b).

\begin{figure}
\centering
\includegraphics[width=12cm]{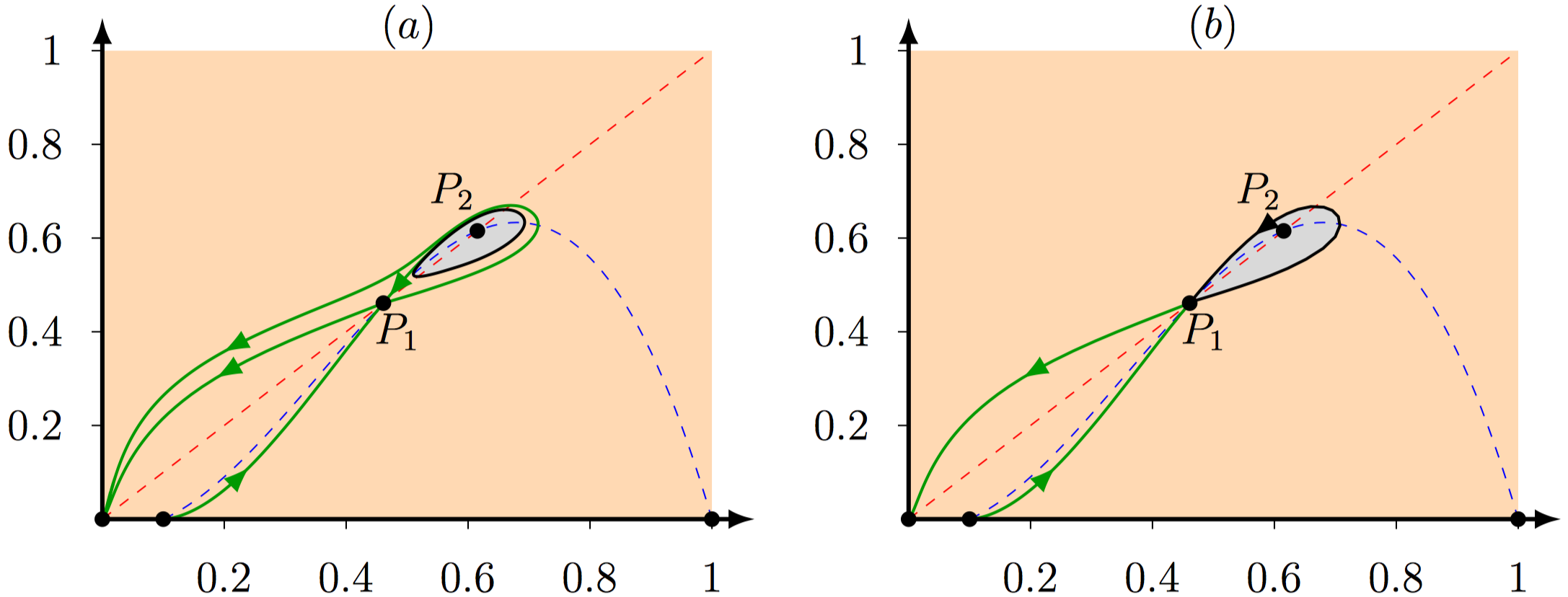}
\caption{For $M=0.1$, $A=0.0365$, and $Q=0.21$, such that $\Delta>0$ \eqref{delta} and $S^*<S\leq S^{**}$, the equilibrium point $(0,0)$ and $P_2$ are both attractors. (a) For $S^*<S=0.121<S^{**}$ the equilibrium point $P_2$ is surrounded by a unstable limit cycle (black curve) that forms the separatrix curve between the basin of attraction of $P_2$ (grey region) and the basin of attraction of $(0,0)$ (orange region). 
(b) For $S=S^{**}=0.148$ the stable and unstable manifold of $P_1$ generate a homoclinic curve (black curve) that forms the separatrix curve between the basins of attraction. The blue (red) curve represents the prey (predator) nullcline. Observe that the same color conventions are used in the upcoming figures.}
\label{fig:06}
\end{figure}

\item  For $S^{**}<S<S^{***}$, both $W^s_{\swarrow}(P_1)$ and $W^s_{\nearrow}(P_1)$ connect with the equilibrium point $(M,0)$ forming two heteroclinic curves. These curves also form the separatrix curves between the stable equilibrium points in $\Phi$. In other words, $W^s(P_1)$ is the separatrix curve, see Figure \ref{fig:08a}(a). 
\item For $S=S^{***}$, $W^s_{\swarrow}(P_1)$ connects with $(1,0)$ generating a heteroclinic curve, which, together with $W^s_{\nearrow}(P_1)$, form the separatrix curves of the basins of attraction in $\Phi$, see Figure \ref{fig:08a}(b).

\begin{figure}
\centering
\includegraphics[width=12cm]{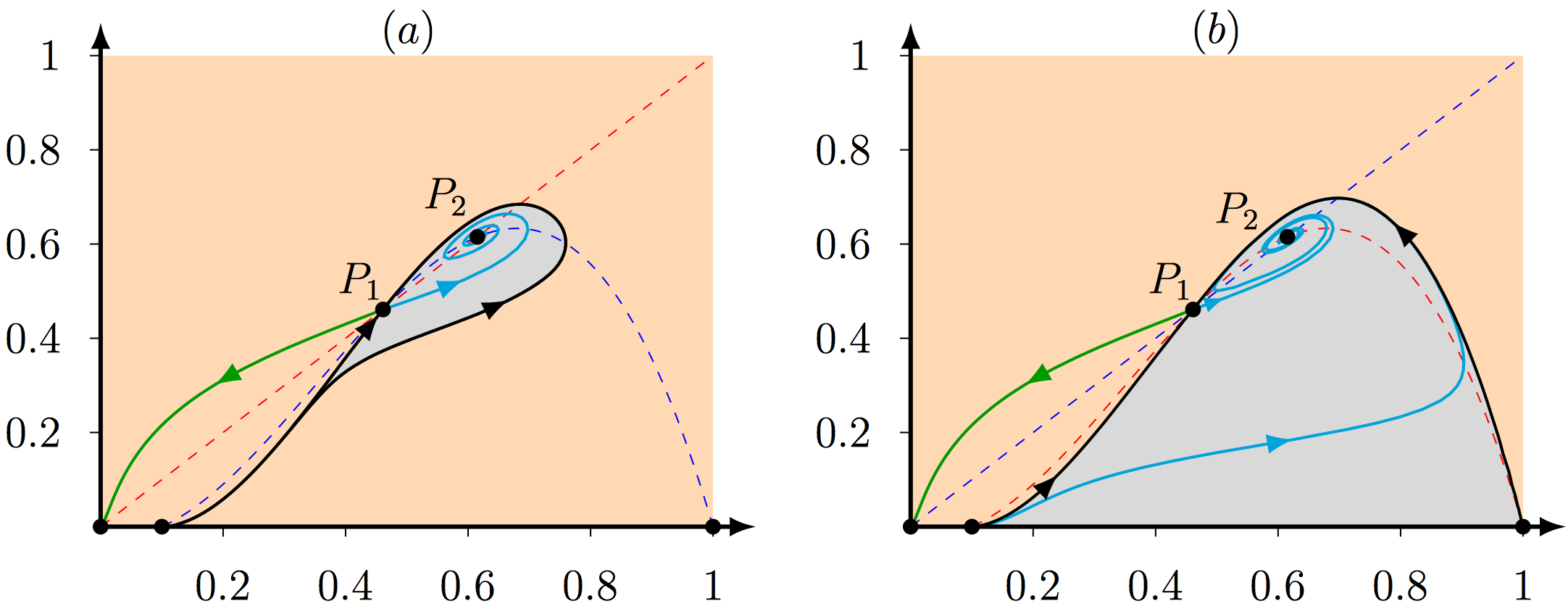}
\caption{For $M=0.1$, $A=0.0365$, and $Q=0.21$, such that $\Delta>0$ and $S^{**}<S\leq S^{***}$, the equilibrium points $(0,0)$ and $P_2$ are attractors. 
(a) For $S^{**}<S=0.16<S^{***}$ both branches of the stable manifold of $P_1$ connect with the equilibrium point $(M,0)$ and $W^s(P_1)$ forms the separatrix curve between the basins of attraction.
(b) For $S=S^{***}=0.1915$ the stable manifold of $P_1$ connects with the unstable manifold of $(1,0)$ and the equilibrium point $(M,0)$, again forming the separatrix curve.  (See Figure \ref{fig:06} for the color conventions.)}
\label{fig:08a}
\end{figure}

\item For $S>S^{***}$, $W^s_{\swarrow}(P_1)$ intersects the boundary of $\Phi$, and $W^s(P_1)$ again forms the separatrix curve in $\Phi$, see Figure \ref{fig:08}.

\begin{figure}
\centering
\includegraphics[width=6.5cm]{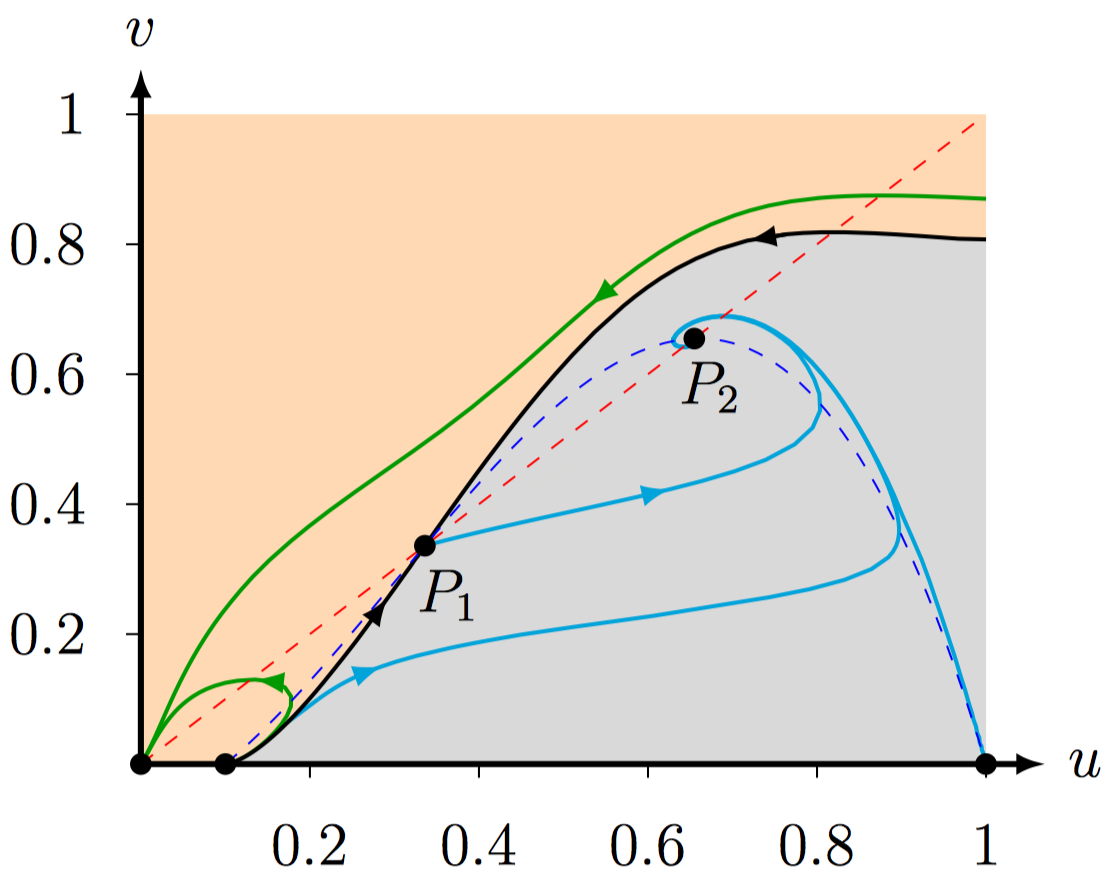}
\caption{For $M=0.1$, $A=0.0365$, $Q=0.21$, and $S=0.1$, such that $\Delta>0$ and $S>S^{***}$,  the equilibrium points $(0,0)$ and $P_2$ are attractors and $W^s(P_1)$ forms the separatrix curve in $\Phi$. (See Figure \ref{fig:06} for the color conventions.)}
\label{fig:08}
\end{figure}
\end{enumerate}
Note that the system parameters $(Q, M, A)$ are fixed at $(0.21, 0.1, 0.0365)$ in Figures~\ref{fig:07} - \ref{fig:08}. Consequently, $u_{1,2}$ and $H$ are also constant. In particular, $u_1 \approx 0.4611, u_2 \approx 0.6153$ and $H \approx 0.01287$.
Observe that we described the most involved situation above and that there are actually parameter values for which $S^*$ is negative. Furthermore, there are also parameter values for which $S^{**}$ and $S^{***}$ do not exist. In these instances, there are fewer cases for increasing $S$ and the observed change in behavior will be a subset of what is described above, see also Figures~\ref{fig:03} and \ref{summ}

\subsubsection{$\Delta=0$}

Finally, if $\Delta=0$ \eqref{delta} the equilibrium points $P_1$ and $P_2$ collapse such that  $u_1=u_2=E=(H+M+1-A)/2$. Therefore, system \eqref{hta2} has one equilibrium point of order two in the first quadrant given by $(E,E)$.
\begin{lemma} \label{lemm5}
Let the system parameters be such that $\Delta=0$ \eqref{delta}. Then, the equilibrium point $(E,E)$ is:
\begin{enumerate}[label=\roman*.]
\item a saddle-node attractor if $S<f(E)=\dfrac{Q(H+M+1-A)}{H+M+1+A}$\,,
\item a saddle-node repeller if $S>f(E)$\,.
\end{enumerate}
\end{lemma}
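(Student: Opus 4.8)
The plan is to establish that $J(E,E)$ has a simple zero eigenvalue, to identify the remaining eigenvalue together with the threshold $f(E)$, and then to reduce the flow near $(E,E)$ to a one-dimensional centre manifold in order to recognise a saddle-node, reading off its attracting/repelling nature from the sign of the non-zero eigenvalue.

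First I would evaluate the Jacobian at $(E,E)$ by setting $u_{1,2}=E$ in \eqref{JP1}. The determinant formula already obtained for $\Delta>0$ gives $\det(J(E,E))=SE^3(A+E)(H+E)\bigl(-H-M-1+A+2E\bigr)$; since $E=(H+M+1-A)/2$ the last factor is zero, so $\det(J(E,E))=0$ and $0$ is an eigenvalue, with eigenvector the diagonal direction $(1,1)^T$. The remaining eigenvalue is $\tr(J(E,E))=E(A+E)\bigl(f(E)-S\bigr)$, with $f$ as in \eqref{eq:fu}. To make $f(E)$ explicit, I would use that, by \eqref{htae1}--\eqref{htae3}, $Qu-g(u)=d(u)=(u+H)\bigl(u^2-(H+M+1-A)u+AM/H\bigr)$; when $\Delta=0$ the quadratic factor is the perfect square $(u-E)^2$, so $d'(E)=0$ and hence $g'(E)=Q$. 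Therefore $f(E)=Eg'(E)/(A+E)=EQ/(A+E)$, and inserting $E=(H+M+1-A)/2$, $A+E=(H+M+1+A)/2$ gives $f(E)=Q(H+M+1-A)/(H+M+1+A)$, which is positive since $A<1\le 1+H+M$. In particular $\tr(J(E,E))>0$ for $S<f(E)$ and $\tr(J(E,E))<0$ for $S>f(E)$; the borderline $S=f(E)$, where $J(E,E)$ is nilpotent, is a higher-codimension point (a Bogadonov--Takens candidate) and is excluded here.

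Next I would confirm that $(E,E)$ is a saddle-node. Since $J(E,E)$ is semi-hyperbolic, I would invoke the normal-form theory for such points (Dumortier et al.~\cite{dumortier}). The computation is short because the $v$-equation of \eqref{hta2} vanishes on the diagonal $\{u=v\}$, the line through $(E,E)$ tangent to the zero-eigenvector: in coordinates $(\xi,\eta)$ along the zero- and non-zero-eigenvectors, the centre manifold is a graph $\eta=h(\xi)=O(\xi^2)$ and the reduced equation carries no linear term, so its quadratic coefficient may be computed by restricting to $\{u=v\}$ (i.e.\ $\eta=0$). Putting $u=v=E+\xi$ in the $u$-equation and using $g(u)-Qu=-(u+H)(u-E)^2$ gives $du/d\tau=-E^2(E+H)\xi^2+O(\xi^3)$; passing to the centre coordinate $\xi$ along $(1,1)^T$ (here $dv/d\tau=0$ on $\{u=v\}$ fixes $d\eta/d\tau$ in terms of $d\xi/d\tau$) yields the reduced flow
\[
\dfrac{d\xi}{d\tau}=\alpha\,\xi^2+O(\xi^3),\qquad \alpha=\dfrac{E^3 S(A+E)(E+H)}{\tr(J(E,E))}\neq 0 .
\]
Since the leading term is of even order and non-zero, $(E,E)$ is a saddle-node: a neighbourhood of it is the union of one parabolic sector and two hyperbolic sectors, and its attracting versus repelling character is governed by the sign of the transverse eigenvalue $\tr(J(E,E))$, equivalently of $f(E)-S$, which separates cases (i) and (ii). Equivalently, $(E,E)$ may be regarded as the coalescence, as $\Delta\downarrow 0$, of the saddle $P_1$ (with $\det(J(P_1))<0$, Lemma~\ref{p1}) and $P_2$ (with $\det(J(P_2))>0$, Lemma~\ref{lemm4}), i.e.\ a standard saddle-node bifurcation, once the non-degeneracy $\alpha\neq 0$ is known. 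The step I expect to demand the most care is this last one: correctly applying the semi-hyperbolic classification and matching the sector structure --- hence the ``attractor''/``repeller'' labels --- to the sign of $\tr(J(E,E))$; the algebra behind $g'(E)=Q$, $\det(J(E,E))=0$ and the coefficient $\alpha$ is routine once $E$ is recognised as the double root of the quadratic factor of $d(u)$.
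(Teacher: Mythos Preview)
Your argument is correct and in fact more complete than the paper's. The paper's proof is minimal: it checks $\det(J(E,E))=0$ via $-H-M-1+A+2E=0$, asserts $f(E)=Q(H+M+1-A)/(H+M+1+A)$, and concludes that the behaviour is governed by the sign of $f(E)-S$; it does \emph{not} verify within this proof that $(E,E)$ is genuinely a saddle-node rather than a more degenerate semi-hyperbolic point --- that non-degeneracy is deferred to Theorem~\ref{the5}, where Sotomayor's theorem is invoked. You instead derive the formula for $f(E)$ from the double-root condition (so $d'(E)=0$ and hence $g'(E)=Q$), and then carry out a direct centre-manifold reduction, exploiting the fact that $dv/d\tau$ vanishes on the diagonal $u=v$ (the centre eigenspace) to read off the quadratic normal-form coefficient $\alpha\neq 0$ from $du/d\tau\big|_{u=v}=-u^2(u+H)(u-E)^2$. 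Your route makes the lemma self-contained and produces an explicit coefficient; the paper's buys brevity by packaging the non-degeneracy check into the later bifurcation theorem. One point neither argument spells out, and which you flag as the delicate step: since $\tr(J(E,E))=E(A+E)(f(E)-S)>0$ for $S<f(E)$, the non-zero eigenvalue is then \emph{positive}, so the semi-hyperbolic classification (and the $\Delta\to 0^+$ limit of Lemma~\ref{lemm4}) makes the node sector \emph{repelling} in that case --- in agreement with the caption of Figure~\ref{fig:saddlenode}, but opposite to the labelling in the lemma statement itself.
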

See Figure \ref{fig:saddlenode} for phase portraits related to both cases of Lemma~\ref{lemm5}.

\begin{figure}
\centering
\includegraphics[width=12cm]{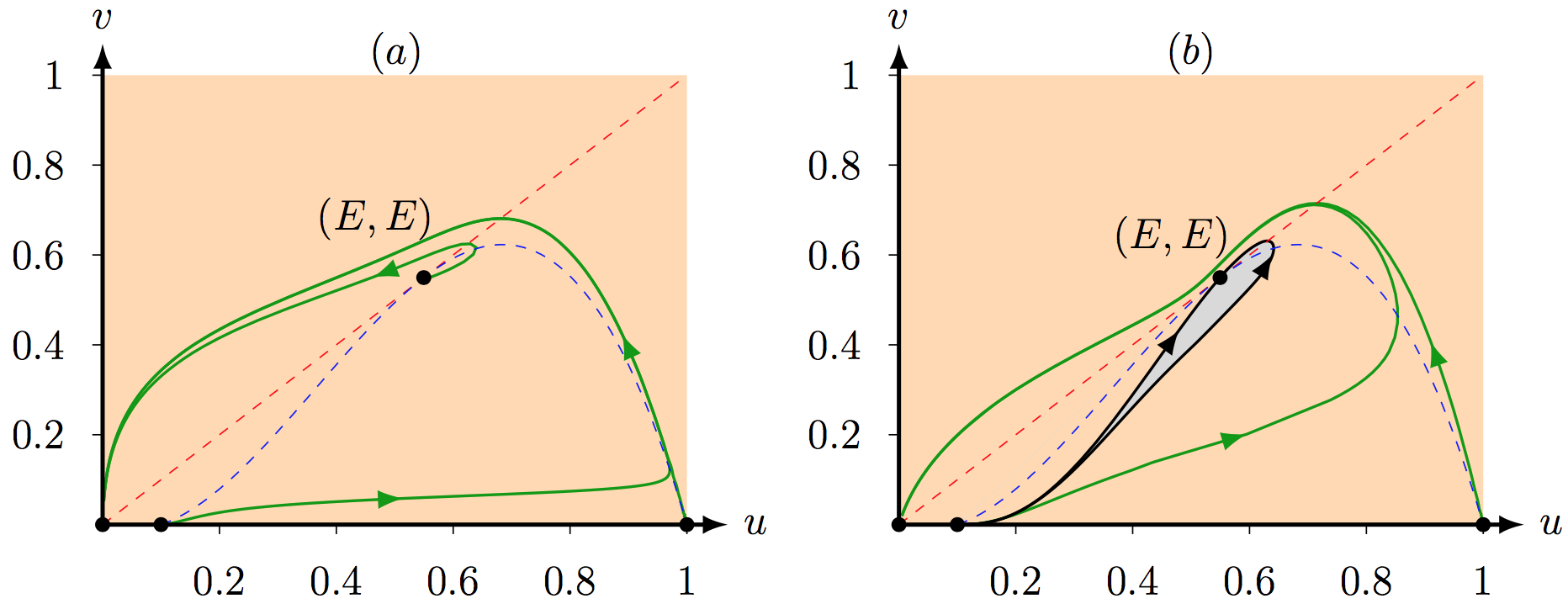}
\caption{For $M=0.1$, $A=0.0009$, and $Q=0.20283145$, such that $\Delta=0$, system \eqref{hta2} has one equilibrium point $(E,E)$ of order two. (a) For $S<f(E)$ the equilibrium point $(E,E)$ is a saddle-node repeller. (b) For $S>f(E)$ the equilibrium point $(E,E)$ is a saddle-node attractor. (See Figure \ref{fig:06} for the color conventions.)}
\label{fig:saddlenode}
\end{figure}

\begin{proof}
Evaluating $-H-M-1+A+2u$ at $u=E$ gives    
\begin{align*}
-H-M-1+A+2u&=-H-M-1+A+(H+M+1-A)=0\,.
\end{align*}
Therefore, $\det(J(E,E))=0$. Next, evaluating $f(E)-S$, where $f$ is defined in \eqref{eq:fu}, gives
\begin{align*}
f(E)-S =&\dfrac{Q(H+M+1-A)}{H+M+1+A}-S.
\end{align*}
Therefore, the behaviour of the equilibrium point $(E,E)$ depends on the parity of $Q(H+M+1-A)/(H+M+1+A)-S$ and recall that $H$ does not depend on $S$. 
\end{proof}

\subsection{Bifurcation Analysis}
In this section, we discuss some of the possible bifurcation scenarios of system \eqref{hta2}.

\begin{theorem}\label{the5}
Let the system parameters be such that $\Delta=0$ \eqref{delta}. Then, system \eqref{hta2} experiences a saddle-node bifurcation at the equilibrium point $(E,E)$ (for changing $Q$).
\end{theorem}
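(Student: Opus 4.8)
The plan is to verify the hypotheses of Sotomayor's saddle--node bifurcation theorem (cf.\ \cite{chicone}), treating $Q$ as the bifurcation parameter while $A$, $M$ and $S$ are held fixed. Write system \eqref{hta2} as $\dot{\mathbf z}=F(\mathbf z,Q)$ with $\mathbf z=(u,v)^T$, and let $Q=Q_{SN}$ denote the parameter value for which $\Delta=0$ \eqref{delta}; at this value $P_1$ and $P_2$ have merged into the single equilibrium $(E,E)$ with $E=(H+M+1-A)/2$.

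First I would collect the linear data at $(E,E)$. By Lemma~\ref{lemm5} we have $\det(J(E,E))=0$, and as long as $S\neq f(E)$ (so that $\tr(J(E,E))\neq0$, recall \eqref{eq:fu}) the matrix $J(E,E)$ from \eqref{JP1} has an algebraically simple zero eigenvalue. Using the equilibrium identity $g'(E)=Q$ (which is \eqref{htae3} evaluated at $u=E$, equivalently $\det(J(E,E))=0$), one finds that the null space of $J(E,E)$ is spanned by $\mathbf v=(1,1)^T$ and the null space of $J(E,E)^T$ by $\mathbf w=\bigl(1,-QE/(S(A+E))\bigr)^T$.

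Next I would check the two genericity conditions. For transversality in the parameter, differentiating \eqref{hta2} with respect to $Q$ yields $F_Q(E,E,Q_{SN})=(-E^3,0)^T$, hence $\mathbf w^{T}F_Q=-E^{3}\neq0$. For the nondegeneracy (quadratic) condition I would evaluate the directional second derivative $D^2_{\mathbf z}F(E,E,Q_{SN})(\mathbf v,\mathbf v)$, i.e.\ the vector whose $k$-th component is $\partial_{uu}F_k+2\partial_{uv}F_k+\partial_{vv}F_k$ evaluated at $(E,E)$. A short computation shows that the $v$-component of this vector vanishes identically, while its $u$-component equals $P''(E)-6QE$ with $P(u):=u^{2}g(u)$; using $g(E)=QE$ and $g'(E)=Q$ this simplifies to $E^{2}g''(E)$. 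Since $g''(u)=2(1+M-A)-6u$ and $2E=H+M+1-A$, we get $g''(E)=A-1-M-3H=-(1-A+M+3H)<0$ because $A\in(0,1)$ and $H,M>0$. Therefore $\mathbf w^{T}\!\left[D^2_{\mathbf z}F(\mathbf v,\mathbf v)\right]=E^{2}g''(E)\neq0$, all hypotheses of Sotomayor's theorem hold, and \eqref{hta2} undergoes a saddle--node bifurcation at $(E,E)$ as $Q$ crosses $Q_{SN}$; the signs of $\mathbf w^{T}F_Q$ and $\mathbf w^{T}[D^2F(\mathbf v,\mathbf v)]$ moreover fix on which side of $Q_{SN}$ the pair $P_1,P_2$ exists.

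The genuinely delicate point is the remark that the zero eigenvalue is simple: this fails exactly when $S=f(E)$, in which case $J(E,E)$ is nilpotent and the point is of Bogdanov--Takens type rather than a saddle--node, so that degenerate case must be excluded here (it is treated separately). The remaining work is the routine but slightly tedious algebra in the nondegeneracy step --- expanding $\partial_{uu}F_k+2\partial_{uv}F_k+\partial_{vv}F_k$ for $k=1,2$ and collapsing it with the equilibrium identities --- together with the elementary sign estimate $1-A+M+3H>0$, which is immediate from $(A,M)\in(0,1)^2$ and $H>0$ established after \eqref{htae1}.
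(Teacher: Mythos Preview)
Your argument is correct and follows the same route as the paper---verification of the hypotheses of Sotomayor's saddle--node theorem with $Q$ as bifurcation parameter, using the right null vector $(1,1)^T$ and a left null vector of $J(E,E)$, then checking $\mathbf w^{T}F_Q\neq0$ and $\mathbf w^{T}[D^2F(\mathbf v,\mathbf v)]\neq0$.

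Two minor points are worth recording. First, you apply Sotomayor to the actual vector field of \eqref{hta2}, whereas the paper silently divides out the nonvanishing factors $u^{2}$ and $S(u+A)v$ and works with the reduced field $\bigl(g(u)-Qv,\ u-v\bigr)^{T}$; both are legitimate at an interior equilibrium, but your version is the more careful one and explains why your quadratic term comes out as $E^{2}g''(E)$ rather than the paper's expression. Second, you explicitly isolate the requirement $S\neq f(E)$ so that the zero eigenvalue is simple; the paper's proof of this theorem does not state this hypothesis, deferring the degenerate case $S=f(E)$ to the Bogdanov--Takens result without flagging the exclusion here. Both refinements strengthen the presentation without changing the underlying strategy.
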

\begin{proof}
The proof of this theorem is based on Sotomayor's Theorem \cite{perko}.  
For $\Delta=0$, there is only one equilibrium point $(E,E)$ in the first quadrant, with $E=(H+M+1-A)/2$. 
From the proof of Lemma~\ref{lemm5} we know that $\det(J(E,E))=0$ if $\Delta=0$. Additionally, let $U=(1,1)^T$ the eigenvector corresponding to the eigenvalue $\lambda=0$ of the Jacobian matrix $J(E,E)$, and let 
$$W=\left(-\frac{S(H+M+1+A)}{Q(H+M+1-A)},1\right)^T$$ be the eigenvector corresponding to the eigenvalue $\lambda=0$ of the transposed Jacobian matrix $J(E,E)^T$. 
	
If we represent \eqref{hta2} by its vector form
\begin{align} \nonumber 
F(u,v;Q) =\begin{pmatrix}
(u+A)(1-u)(u-M)-Qv\\ 
u-v
\end{pmatrix},
\end{align}
then differentiating $F$ at $(E,E)$
with respect to the bifurcation parameter $Q$ gives
\[F_Q(E,E;Q)=\begin{pmatrix}
-\dfrac{1}{2}(H+M+1-A)\\ 
0
\end{pmatrix}.\]
Therefore,
\[W \cdot F_Q(E,E;Q)=\dfrac{S(H+M+1+A)}{2Q}\neq0.\]
Next, we analyse the expression $W \cdot [D^2F(E,E;Q)(U,U)]$.
Therefore, we first compute the Hessian matrix $D^2F(u,v;Q)(V,V)$, where $V=(v_1,v_2)$,	
\[\begin{aligned}
D^2F(u,v;Q)(V,V) =& \dfrac{\partial^2F(u,v;Q)}{\partial u^2}v_1v_1+\dfrac{\partial^2F(u,v;Q)}{\partial u\partial v}v_1v_2+\dfrac{\partial^2F(u,v;Q)}{\partial v\partial u}v_2v_1\\
&+\dfrac{\partial^2f(u,v;Q)}{\partial v^2}v_2v_2\,.
\end{aligned}\]
At the equilibrium point $(E,E)$ and $V=U$, this simplifies to
\[\begin{aligned}	
D^2F(E,E;Q)(U,U)& = \begin{pmatrix}
2(M-2-A)\\ 
0
\end{pmatrix}\,. 
\end{aligned}\]
Hence, since $M \in (0,1)$ and $A \in (0,1)$, we get
\[\begin{aligned}
W \cdot [D^2F(E,E;Q)(U,U)]= -\dfrac{2S(H+M+1+A)(M-2-A)}{Q(H+M+1-A)}\neq0 \,.
\end{aligned}\]
By Sotomayor's Theorem \cite{perko} it now follows that system \eqref{hta2} has a saddle-node bifurcation at the equilibrium point $(E,E)$.
\end{proof}

\begin{theorem} \label{the4}
Let the system parameters be such that $\Delta=0$ \eqref{delta} and $S=f(E)$ \eqref{eq:fu}. Then, system \eqref{hta2} experiences a Bogdanov--Takens bifurcation at the equilibrium point $(E,E)$ (for changing $(Q,S)$).
\end{theorem}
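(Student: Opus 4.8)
The plan is to verify the standard codimension-two Bogdanov--Takens (BT) conditions via a normal-form reduction, following \cite{perko} (see also \cite{chicone}). First I would record that the linearisation at $(E,E)$ has a double zero eigenvalue: by the proof of Lemma~\ref{lemm5}, $\Delta=0$ forces $\det(J(E,E))=0$ and $S=f(E)$ forces $\tr(J(E,E))=0$, so both eigenvalues vanish; since the entry $-QE^2$ of $J(E,E)$ is nonzero, $J(E,E)$ is not the zero matrix and is therefore linearly conjugate to the nilpotent Jordan block $\left(\begin{smallmatrix}0&1\\0&0\end{smallmatrix}\right)$. The kernel of $J(E,E)$ is spanned by $U=(1,1)^T$ (as in Theorem~\ref{the5}); I would additionally fix a generalised eigenvector $U_1$ with $J(E,E)U_1=U$, together with left (generalised) eigenvectors $W_0,W_1$ normalised so that $W_1\cdot U=W_0\cdot U_1=1$ and $W_1\cdot U_1=W_0\cdot U=0$ (so $W_1$ is proportional to the vector $W$ appearing in Theorem~\ref{the5}).

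Next I would translate $(E,E)$ to the origin via $u=E+x$, $v=E+y$ and Taylor-expand the right-hand side of \eqref{hta2} through second order (no constant term; linear part $J(E,E)$; explicit quadratic part). Passing to coordinates adapted to $\{U,U_1\}$ and then applying a near-identity quadratic transformation brings \eqref{hta2} to the BT normal form
\[
\dot\xi=\eta\,,\qquad \dot\eta=a_0\,\xi^2+b_0\,\xi\eta+O\!\left(\|(\xi,\eta)\|^3\right)\,,
\]
and the bifurcation is non-degenerate precisely when $a_0\neq0$ and $b_0\neq0$. Rather than perform the transformation explicitly, I would evaluate the closed forms $a_0=\tfrac12\,W_1\cdot B(U,U)$ and $b_0=W_0\cdot B(U,U)+W_1\cdot B(U,U_1)$, where $B$ denotes the symmetric bilinear form of the second derivatives of \eqref{hta2} at $(E,E)$. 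Using $g(E)=QE$, $S=f(E)$ and the identities $E=(H+M+1-A)/2$, $E^2=AM/H$ coming from $\Delta=0$, these collapse to elementary expressions: $a_0$ is a nonzero multiple of $g''(E)=A-1-M-3H$, which is $<0$ since $A<1$ (this is the analogue of $W\cdot[D^2F(E,E;Q)(U,U)]\neq0$ already verified in Theorem~\ref{the5}), while $b_0$ reduces, for a suitable normalisation, to $(1+M)E(A-E)-4AM$.

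It remains to check the transversality (versality) condition, namely that \eqref{hta2} is a generic two-parameter unfolding of the BT point: the map sending $(Q,S)$ to the pair (constant coefficient, linear coefficient) of the unfolded normal form must be a local diffeomorphism at the bifurcation point. This amounts to non-singularity of a $2\times2$ Jacobian, which follows because the constant coefficient is governed by $\Delta$, and $\Delta$ for fixed $(A,M)$ depends on $Q$ alone through $H$ (with $\partial\Delta/\partial H=2(H+M+1-A)+4AM/H^2>0$ and $\partial H/\partial Q\neq0$), whereas the linear coefficient is governed by $S-f(E)$, which depends on $S$ with derivative $1$; the Jacobian is thus triangular with nonzero diagonal. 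The Bogdanov--Takens bifurcation theorem \cite{perko} then gives the claim.

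The step I expect to be the main obstacle is certifying $b_0\neq0$ (and making the versality argument fully rigorous). Unlike $a_0$, the coefficient $b_0$ is not sign-definite on all of $\Pi$: $(1+M)E(A-E)-4AM$ is strictly negative whenever $A\le E$ — in particular whenever $3A\le H+M+1$, hence whenever $A\le1/3$ — but it can change sign for larger $A$, so $b_0\neq0$ has to be argued in the relevant parameter range rather than read off a sign. The remaining bookkeeping (choice of eigenvectors, the quadratic change of coordinates, and the Taylor expansion) is routine but lengthy.
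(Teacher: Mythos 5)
Your opening step --- showing that at $\Delta=0$ and $S=f(E)$ the Jacobian $J(E,E)$ has a double zero eigenvalue but is a \emph{nonzero} nilpotent matrix, hence conjugate to $\left(\begin{smallmatrix}0&1\\0&0\end{smallmatrix}\right)$ --- is essentially the paper's entire proof: the paper computes $J(E,E)=SE(E+A)\left(\begin{smallmatrix}1&-1\\1&-1\end{smallmatrix}\right)$, exhibits the Jordan normal form via a transformation matrix $\Upsilon$, and then concludes the Bogdanov--Takens bifurcation by citation, without ever touching the quadratic normal-form coefficients or the versality of the $(Q,S)$-unfolding. Everything after that in your proposal is additional work the paper does not attempt. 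Your claim about $a_0$ does check out: for the full vector field of \eqref{hta2} one finds $B(U,U)=\bigl(E^2 g''(E),\,0\bigr)^T$ (the first component because $g(E)=QE$ and $g'(E)=Q$ make the terms $4EQ$ and $-4EQ$ cancel, the second because $2SE+2SA-2S(E+A)=0$), with $g''(E)=A-1-M-3H<0$ since $A<1$, and the left null vector of $J(E,E)$ is proportional to $(1,-1)^T$, so $a_0\neq0$. The gap you flag --- certifying $b_0\neq0$ on the relevant parameter set and making the transversality of the two-parameter unfolding rigorous --- is a genuine gap in your argument, but it is one your proposal shares with, rather than one resolved by, the published proof: the linear degeneracy conditions alone do not distinguish a nondegenerate cusp from a degenerate BT point, so the paper's theorem as proved really only establishes the double-zero/nilpotent structure. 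Relative to the paper you have proved at least as much, and your outline (including the honest identification of where $b_0$ could change sign) is the correct blueprint for upgrading the statement to a fully verified codimension-two Bogdanov--Takens bifurcation.
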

\begin{proof}
If $\Delta=0$, or equivalently $Q=g'(E)$, and $f(E)=S$, then the Jacobian matrix of system \eqref{hta2} evaluated at the equilibrium point $(E,E)$ simplifies to
\[\begin{aligned}
J(E,E) &=\begin{pmatrix}
S(E+A)E & -S(E+A)E \\ 
S(E+A)E & -S(E+A)E 
\end{pmatrix},\\
&=\dfrac{S}{4}(H+M+1-A)(H+M+1+A) \begin{pmatrix}
1 & -1 \\ 
1 & -1 
\end{pmatrix}. 
\end{aligned}\]
So, $\det(J(E,E))=0$ and $\tr(J(E,E))=0$.
Next, we find the Jordan normal form of $J(E,E)$. The latter has two zero eigenvalues with eigenvector $\psi^1=(1,1)^T$. This vector will be the first column of the matrix of transformations $\Upsilon$. For the second column of $\Upsilon$ we choose the generalised eigenvector $\psi^2=(1,0)^T$. Thus,
$\Upsilon = \begin{pmatrix} 
1 & 1 \\ 
1 & 0 
\end{pmatrix}$ and 
\[ \begin{aligned}
\Upsilon^{-1}(J(E,E))\Upsilon &= \dfrac{S}{4}(H+M+1-A)(H+M+1+A)\begin{pmatrix}
0 & 1 \\ 
0 & 0 
\end{pmatrix} . 
\end{aligned}\]
Hence, we have the Bogdanov--Takens bifurcation, or bifurcation of codimension two, and the equilibrium point $(E,E)$ is a cusp point for $(Q,S)=(Q^{**},S^*(Q^{**}))$ such that $\Delta=0$ and $f(E)=S$ \cite{xiao2}, see Figure \ref{fig:05}.
\end{proof} 

\begin{figure}
\centering
\includegraphics[width=6.5cm]{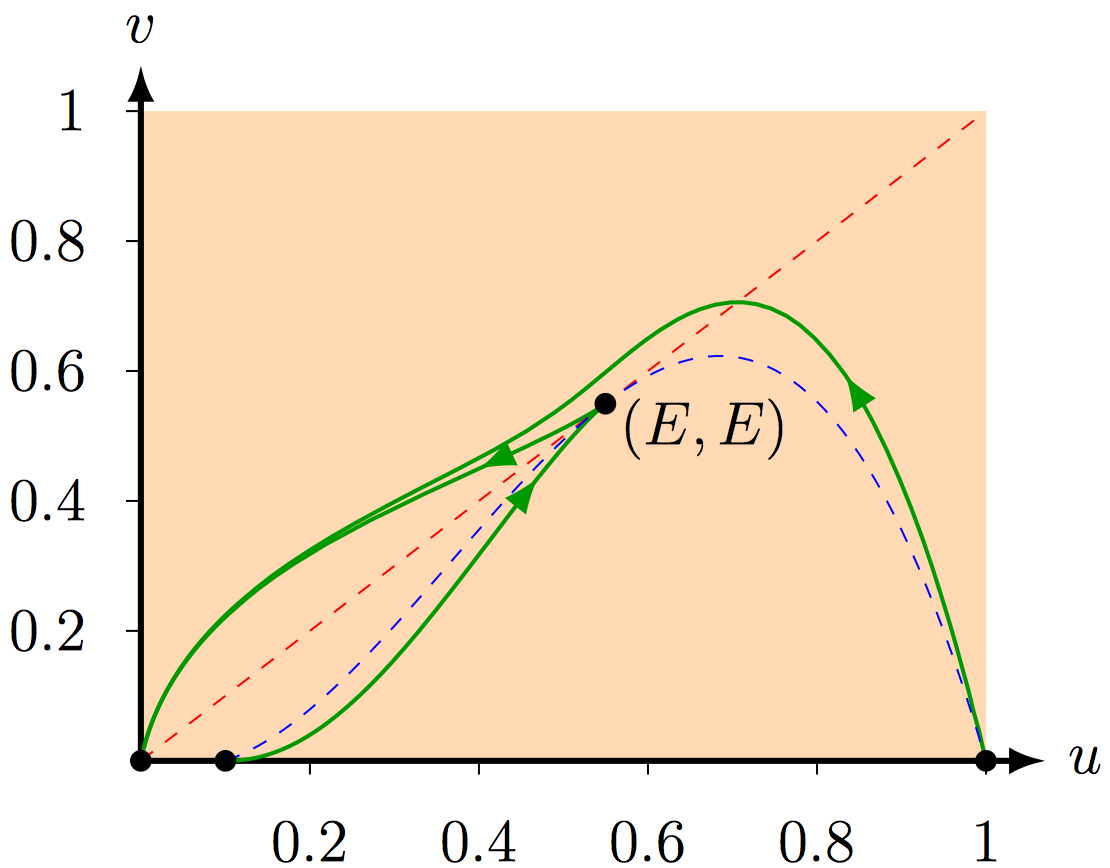}
\caption{For $M=0.1$, $A=0.0009$, $Q=0.20283145$, and $S=0.20249927$, such that $\Delta=0$ and $f(E)=S$, the point $(0,0)$ is an attractor and the equilibrium point $(E,E)$ is a cusp point. (See Figure \ref{fig:06} for the color conventions.)}
\label{fig:05}
\end{figure}

\begin{figure}
\centering
\includegraphics[width=12cm]{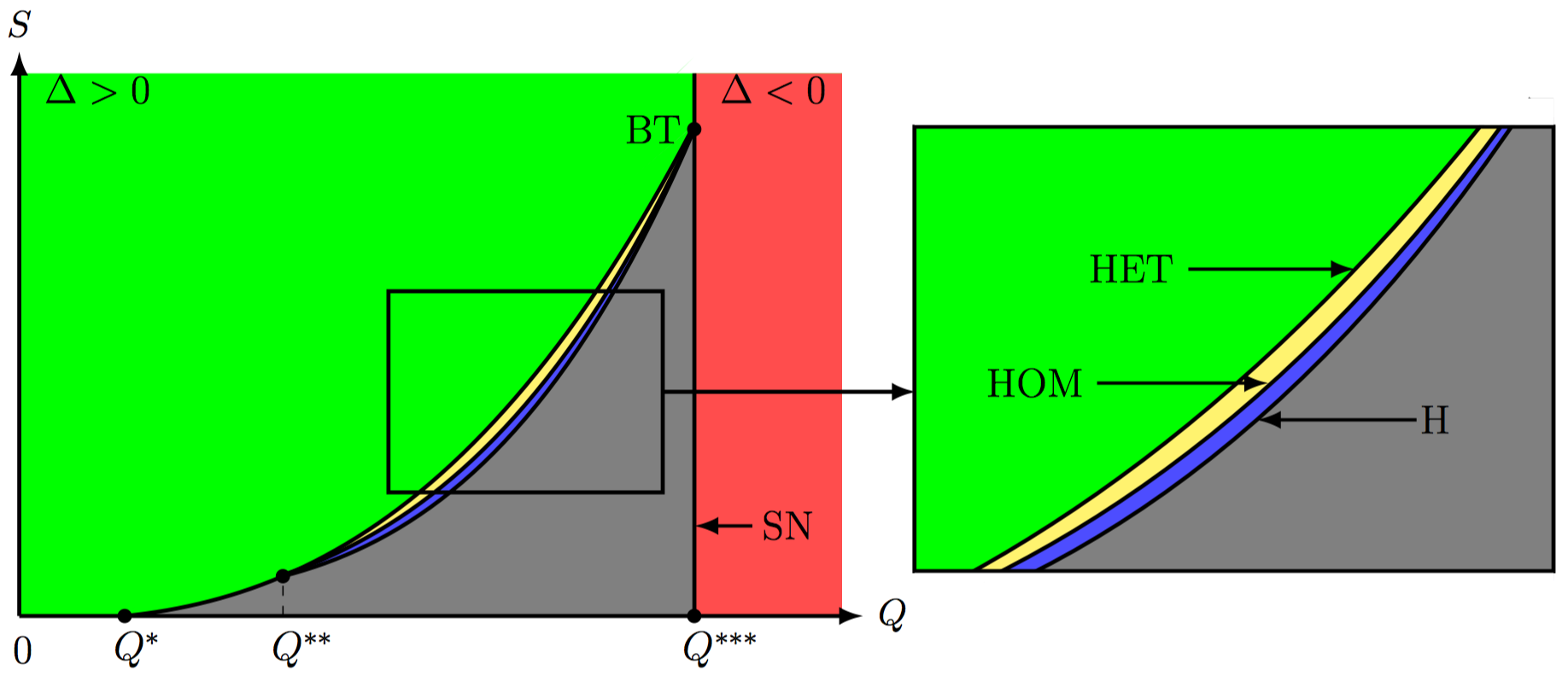}
\caption{The bifurcation diagram of system \eqref{hta2} for $A,M$ fixed and created with the numerical bifurcation package MATCONT \cite{matcont}. The curve H represents the Hopf curve at $S=S^*=f(u_2)$ where $P_2$ changes stability (Lemma~\ref{lemm4}), HOM represents the homoclinic curve of $P_1$ at $S=S^{**}$, HET represents the heteroclinic curve at $S=S^{***}$ connecting $P_1$ and $(1,0)$, SN represents the saddle-node curve from Lemma~\ref{lemm5} where $\Delta=0$, and $BT$ represents the Bogdanov--Takens bifurcation from Theorem~\ref{the4} where $\Delta=0$ and $S=F(E)$. Note that $P_2$ is always an attractor for $Q<Q^*$.}
\label{fig:03}
\end{figure}
The bifurcation curves obtained from Lemma~\ref{lemm4}, Lemma~\ref{lemm5}, and  Theorem \ref{the4} divide the $(Q,S)$-parameter-space into five parts, see Figure \ref{fig:03}. Modifying the parameter $Q$ -- while keeping the other two parameters $A,M$ fixed -- impacts the number of positive equilibrium points of system \eqref{hta2}. The modification of the parameter $S$ changes the stability of the positive equilibrium point $P_2$ of system \eqref{hta2}, while the other equilibrium points $(0,0)$, $(M,0)$, $(1,0)$ and $P_1$ do not change their behaviour. 
There are no positive equilibrium points in system \eqref{hta2} when the parameters $Q,S$ are located in the red area where $\Delta<0$ \eqref{delta}.
In this case, the origin is a global attractor, see Lemma~\ref{lemm2} and Figure \ref{fig:04}.  
For $Q=Q^{***}$, which is the saddle-node curve SN in Figure~\ref{fig:03}, the
equilibrium points $P_1$ and $P_2$ collapse since $\Delta=0$, see Lemma \ref{lemm5} and Figure \ref{fig:saddlenode}.
So, system \eqref{hta2} experiences a saddle-node bifurcation and a Bogdanov--Takens bifurcation (labeled BT in Figure~\ref{fig:03}) along this line, see Theorems \ref{the5} and \ref{the4}, and see also Figure \ref{fig:05}. When the parameters $Q,S$ are to the left of the line $Q=Q^{***}$, system \eqref{hta2} has two equilibrium points $P_1$ and $P_2$. The equilibrium point $P_1$ is always a saddle point, see Lemma~\ref{p1}, while $P_2$ can be unstable (grey area, see also Figure~\ref{fig:07}) or stable.
For $(Q,S)$ in the blue area the stable equilibrium point $P_2$ is surrounded by an unstable limit cycle, see also Figure~\ref{fig:06}. 
For $(Q,S)$ in the yellow area the basin of attraction of the stable equilibrium point $P_2$ is formed by $W^s(P_1)$ which connects $P_1$ with $(M,0)$, see also Figure~\ref{fig:08a}(a).
Finally, for $(Q,S)$ in the green area the basin of attraction of the stable equilibrium point $P_2$ is formed by $W^s(P_1)$ which connects to the boundary of $\Phi$, see also Figure~\ref{fig:08}.

\section{Conclusions}
In this manuscript, the Holling--Tanner predator-prey model with strong Allee effect and functional response Holling type II, i.e \eqref{hta1} with $m>0$, was studied. Using a diffeomorphism, see Theorem \ref{the1}, we analysed a topologically equivalent system \eqref{hta2}. This system has four system parameters which determine  the number and the stability of the equilibrium points. We showed that the equilibrium points $(1,0)$ and $P_1$ are always hyperbolic saddle points, $(M,0)$ is a hyperbolic repeller, and $(0,0)$ is a non-hyperbolic attractor, see Lemmas~\ref{eqax}--\ref{p1}. In contrast, the equilibrium point $P_2$ can be an attractor or a repeller, depending on the trace of its Jacobian matrix, see Lemma \ref{lemm4}. Furthermore, for some sets of parameters values the stable manifold of $P_1$ determines a separatrix curve which divides the basins of attraction of $(0,0)$ and $P_2$, see Figures \ref{fig:07}--\ref{fig:08}, while the separatrix curve is determined by an unstable limit cycle for other parameters values, see Figure \ref{fig:06}(a). 
The equilibrium points $P_1$ and $P_2$ collapse for $\Delta=0$ \eqref{delta} and system \eqref{hta2} experiences a saddle-node bifurcation, see Theorem \ref{the5}. Additionally, for $S=f(E)$ we obtain a cusp point (Bogdanov--Takens bifurcation) \cite{xiao2}, see Theorem \ref{the4}. We summarise the behavior for changing parameters $S$ and $Q$ in Figure \ref{summ}.

\begin{figure}
\centering
\includegraphics[width=12cm]{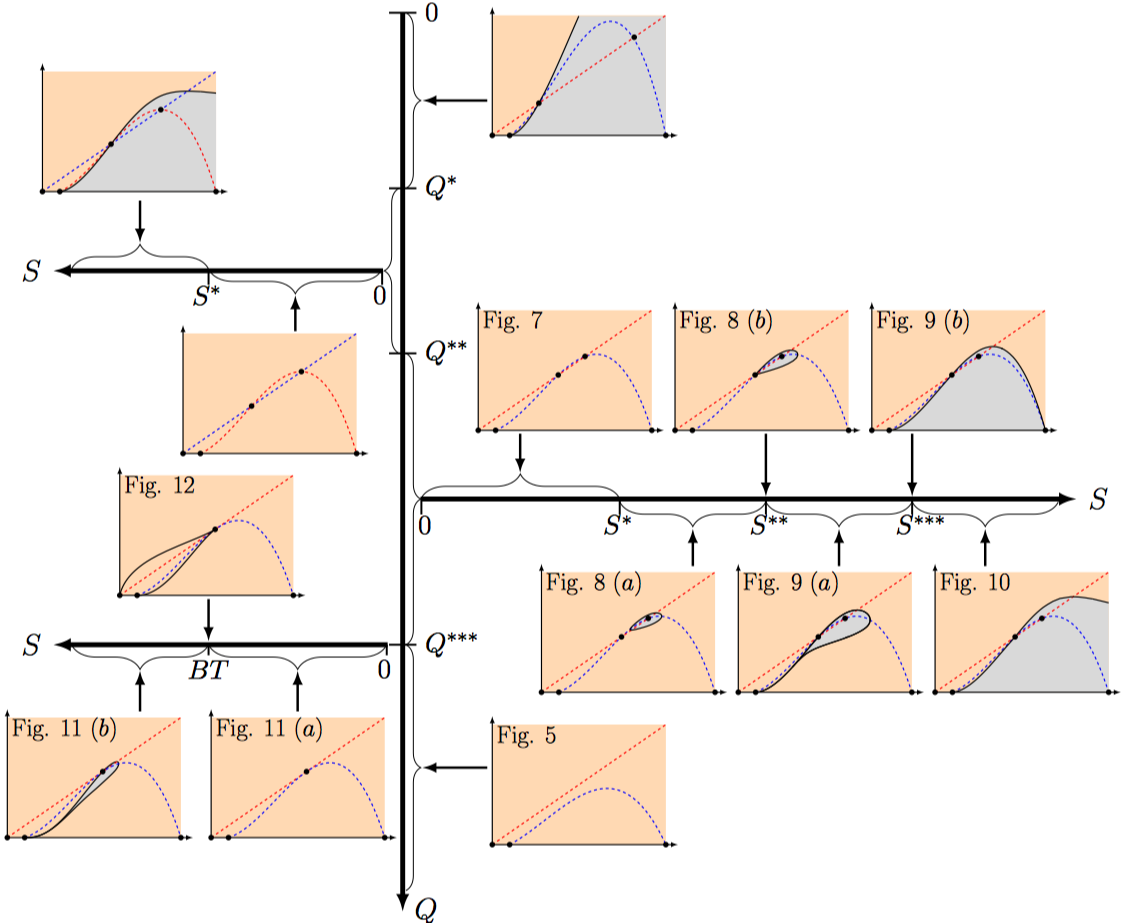}
  
 \caption{Summary of the basins of attraction of $P_2$ and $(0,0)$ for varying $S$ and $Q$ such that $\Delta>0$, $\Delta=0$ (at $Q=Q^{***}$) and $\Delta<0$. Note that the system parameters $Q$ and $S$ are such that $0<S^*<S^{**}<S^{***}$ and  $0<Q^*<Q^{**}<Q^{***}$.}
\label{summ}
\end{figure}

Since the function $\varphi$ is a diffeomorphism preserving the orientation of time, the dynamics of system \eqref{hta2} is topologically equivalent to system \eqref{hta1}, see Theorem \ref{the1} and Figure \ref{dif}. Therefore, we can conclude that for certain population sizes, there exists self-regulation in system \eqref{hta1}, that is, the species can coexist. 
However, system \eqref{hta1} is sensitive to disturbances of the parameters, see the changes of the basin of attraction of $P_2$ in Figure \ref{summ}. In addition, we showed that the self-regulation depends on the values of the parameters $S$ and $Q$. Since $S:=s/(rK)$ (see Theorem~\ref{the1}), this, for instance, implies that increasing the intrinsic growth rate of the predator $r$ -- or the carrying capacity $K$ -- decreases the area of coexistence (related to basins of attraction of $P_2$ in \eqref{hta2}), or, equivalent, decreasing the intrinsic growth rate of the prey $s$ decreases this area of the coexistence. Similar statements can be derived for the other system parameters of \eqref{hta1}.
From the basins of attractions it also follows that -- for a large range of parameter values -- coexistence is expected when the initial prey population is considerably higher than the initial predator population. However, when the proportion of predators is bigger than the proportion of preys both populations go to extinction.

Additionally, we showed that the strong Allee effect in the Holling--Tanner model \eqref{hta1} modified the dynamics of the original Holling--Tanner model \eqref{ht1}. Saez and Gonzalez-Olivares \cite{saez} showed that system \eqref{ht1} has always one positive equilibrium point which can be stable, or unstable surrounded by a stable limit cycle, or stable surrounded by two limit cycles. So, the population could coexist but could not extinct. The modified Holling--Tanner model \eqref{hta1} allows for this duality. Additionally, we showed that the Holling type II functional response presented in this manuscript changed the dynamics of the model with Type III response function \cite{pal,tintinago}, since the limit cycles presented in each study had different stability.

\bibliography{References.bib}

\begin{thebibliography}{38}
\newcommand{\enquote}[1]{``#1''}
\providecommand{\natexlab}[1]{#1}
\providecommand{\url}[1]{\texttt{#1}}
\providecommand{\urlprefix}{URL }
\expandafter\ifx\csname urlstyle\endcsname\relax
  \providecommand{\doi}[1]{doi:\discretionary{}{}{}#1}\else
  \providecommand{\doi}{doi:\discretionary{}{}{}\begingroup
  \urlstyle{rm}\Url}\fi

\bibitem[{Allee \emph{et~al.}(1949)Allee, Park, Emerson, Park \&
  Schmidt}]{allee}
Allee, W., Park, O., Emerson, A., Park, T. \& Schmidt, K. [1949]
  \emph{Principles of animal ecology} (WB Saundere Co. Ltd., Philadelphia).

\bibitem[{Arancibia-Ibarra \& Gonz{\'a}lez-Olivares(2011)}]{arancibia}
Arancibia-Ibarra, C. \& Gonz{\'a}lez-Olivares, E. [2011] \enquote{A modified
  {Leslie--Gower} predator--prey model with hyperbolic functional response and
  {Allee} effect on prey,} \emph{BIOMAT 2010 International Symposium on
  Mathematical and Computational Biology} ,  146--162.

\bibitem[{Arancibia-Ibarra \& Gonz{\'a}lez-Olivares(2015)}]{arancibia2}
Arancibia-Ibarra, C. \& Gonz{\'a}lez-Olivares, E. [2015] \enquote{The
  {Holling--Tanner} model considering an alternative food for predator,}
  \emph{Proceedings of the 2015 International Conference on Computational and
  Mathematical Methods in Science and Engineering CMMSE 2015} ,  130--141.

\bibitem[{Arrowsmith \& Chapman(1996)}]{arrows}
Arrowsmith, D. \& Chapman, C. [1996] \enquote{Dynamical systems: {Differential}
  equations, maps and chaotic behaviour,} \emph{Computers and Mathematics with
  Applications} \textbf{32},  132--132.

\bibitem[{Banerjee(2015)}]{banerjee1}
Banerjee, M. [2015] \enquote{Turing and non-{Turing} patterns in
  two-dimensional prey-predator models,} \emph{Applications of Chaos and
  Nonlinear Dynamics in Science and Engineering} \textbf{4},  257--280.

\bibitem[{Berec \emph{et~al.}(2007)Berec, Angulo \& Courchamp}]{berec}
Berec, L., Angulo, E. \& Courchamp, F. [2007] \enquote{Multiple {Allee} effects
  and population management,} \emph{Trends in Ecology \& Evolution}
  \textbf{22},  185--191.

\bibitem[{Blows \& Lloyd(1984)}]{blows}
Blows, T. \& Lloyd, N. [1984] \enquote{The number of limit cycles of certain
  polynomial differential equations,} \emph{Proceedings of the Royal Society of
  Edinburgh: Section A Mathematics} \textbf{98},  215--239.

\bibitem[{Chicone(2006)}]{chicone}
Chicone, C. [2006] \emph{Ordinary Differential Equations with Applications},
  Texts in Applied Mathematics, Vol.~34 (World Scientific, Springer-Verlag New
  York), ISBN 0-387-30769-9.

\bibitem[{Chivers \emph{et~al.}(2014)Chivers, Gladstone, Herbert \&
  Fuller}]{chivers}
Chivers, W.~J., Gladstone, W., Herbert, R.~D. \& Fuller, M.~M. [2014]
  \enquote{Predator--prey systems depend on a prey refuge,} \emph{Journal of
  Theoretical Biology} \textbf{360},  271--278.

\bibitem[{Courchamp \emph{et~al.}(2008)Courchamp, Berec \&
  Gascoigne}]{courchamp2}
Courchamp, F., Berec, L. \& Gascoigne, J. [2008] \emph{Allee effects in ecology
  and conservation} (Oxford University Press).

\bibitem[{Courchamp \emph{et~al.}(1999)Courchamp, Clutton-Brock \&
  Grenfell}]{courchamp}
Courchamp, F., Clutton-Brock, T. \& Grenfell, B. [1999] \enquote{Inverse
  density dependence and the {Allee} effect,} \emph{Trends in Ecology \&
  Evolution} \textbf{14},  405--410.

\bibitem[{Courchamp \emph{et~al.}(2000)Courchamp, Grenfell \&
  Clutton-Brock}]{courchamp3}
Courchamp, F., Grenfell, B. \& Clutton-Brock, T. [2000] \enquote{Impact of
  natural enemies on obligately cooperative breeders,} \emph{Oikos}
  \textbf{91},  311--322.

\bibitem[{Dhooge \emph{et~al.}(2003)Dhooge, Govaerts \& Kuznetsov}]{matcont}
Dhooge, A., Govaerts, W. \& Kuznetsov, Y. [2003] \enquote{Matcont: a matlab
  package for numerical bifurcation analysis of odes,} \emph{ACM Transactions
  on Mathematical Software (TOMS)} \textbf{29},  141--164.

\bibitem[{Dumortier \emph{et~al.}(2006)Dumortier, Llibre \&
  Art{\'e}s}]{dumortier}
Dumortier, F., Llibre, J. \& Art{\'e}s, J. [2006] \emph{Qualitative theory of
  planar differential systems} (Springer Berlin Heidelberg, Springer-Verlag
  Berlin Heidelberg).

\bibitem[{Gaiko(2013)}]{gaiko}
Gaiko, V. [2013] \emph{Global {Bifurcation} {Theory} and {Hilbert's}
  {Sixteenth} {Problem}}, Mathematics and Its Applications, Vol.~562 (Springer
  Science \& Business Media), ISBN 1-4613-4819-6.

\bibitem[{Ghazaryan \emph{et~al.}(2015)Ghazaryan, Manukian \&
  Schecter}]{ghazaryan}
Ghazaryan, A., Manukian, V. \& Schecter, S. [2015] \enquote{Travelling waves in
  the {Holling--Tanner} model with weak diffusion,} \emph{Proceedings of the
  Royal Society A: Mathematical, Physical and Engineering Sciences}
  \textbf{471},  20150045.

\bibitem[{Gonz{\'a}lez-Olivares \emph{et~al.}(2011)Gonz{\'a}lez-Olivares,
  Mena-Lorca, Rojas-Palma \& Flores}]{gonzalez6}
Gonz{\'a}lez-Olivares, E., Mena-Lorca, J., Rojas-Palma, A. \& Flores, J. [2011]
  \enquote{Dynamical complexities in the {Leslie--Gower} predator--prey model
  as consequences of the {Allee} effect on prey,} \emph{Applied Mathematical
  Modelling} \textbf{35},  366--381.

\bibitem[{Gonz{\'a}lez-Ya{\~n}ez \emph{et~al.}(2007)Gonz{\'a}lez-Ya{\~n}ez,
  Gonz{\'a}lez-Olivares \& Mena-Lorca}]{gonzalezy2}
Gonz{\'a}lez-Ya{\~n}ez, B., Gonz{\'a}lez-Olivares, E. \& Mena-Lorca, J. [2007]
  \enquote{Multistability on a {Leslie--Gower} type predator--prey model with
  nonmonotonic functional response,} \emph{BIOMAT 2006 International Symposium
  on Mathematical and Computational Biology} ,  359--384.

\bibitem[{Harley \emph{et~al.}(2014)Harley, van Heijster, Marangell, Pettet \&
  Wechselberger}]{harley}
Harley, K., van Heijster, P., Marangell, R., Pettet, G. \& Wechselberger, M.
  [2014] \enquote{Existence of traveling wave solutions for a model of tumor
  invasion,} \emph{SIAM Journal on Applied Dynamical Systems} \textbf{13},
  366--396.

\bibitem[{Holling(1959)}]{holling}
Holling, C.~S. [1959] \enquote{The components of predation as revealed by a
  study of small mammal predation of the {European} pine sawfly,} \emph{Tenth
  International Congress of Entomology} \textbf{91},  293--320.

\bibitem[{Kozlova \emph{et~al.}(2005)Kozlova, Singh, Easton \&
  Ridland}]{kozlova}
Kozlova, I., Singh, M., Easton, A. \& Ridland, P. [2005] \enquote{Twospotted
  spider mite predator-prey model,} \emph{Mathematical and Computer Modelling}
  \textbf{42},  1287--1298.

\bibitem[{Kramer \emph{et~al.}(2018)Kramer, Berec \& Drake}]{kramer}
Kramer, A., Berec, L. \& Drake, J. [2018] \enquote{Allee effects in ecology and
  evolution,} \emph{Journal of Animal Ecology} \textbf{87},  7--10.

\bibitem[{Liermann \& Hilborn(2001)}]{liermann}
Liermann, M. \& Hilborn, R. [2001] \enquote{Depensation: evidence, models and
  implications,} \emph{Fish and Fisheries} \textbf{2},  33--58.

\bibitem[{May(1974)}]{may}
May, R. [1974] \emph{Stability and complexity in model ecosystems}, Monographs
  in population biology, Vol.~6 ({Princeton} {University} {Press}, Princeton,
  N.J.).

\bibitem[{Moller \emph{et~al.}(2015)Moller, Stokke \& Samia}]{moller}
Moller, A., Stokke, B. \& Samia, D. [2015] \enquote{Hawk models, hawk mimics,
  and antipredator behavior of prey,} \emph{Behavioral Ecology} \textbf{26},
  1039--1044.

\bibitem[{Pal \& Mandal(2014)}]{pal}
Pal, P. \& Mandal, P. [2014] \enquote{Bifurcation analysis of a modified
  {Leslie--Gower} predator--prey model with {Beddington--DeAngelis} functional
  response and strong {Allee} effect,} \emph{Mathematics and Computers in
  Simulation} \textbf{97},  123--146.

\bibitem[{Perko(2001)}]{perko}
Perko, L. [2001] \emph{Differential Equations and Dynamical Systems} (Springer
  New York).

\bibitem[{Saez \& Gonz{\'a}lez-Olivares(1999)}]{saez}
Saez, E. \& Gonz{\'a}lez-Olivares, E. [1999] \enquote{Dynamics on a
  predator--prey model,} \emph{SIAM Journal on Applied Mathematics}
  \textbf{59},  1867--1878.

\bibitem[{Stephens \& Sutherland(1999)}]{stephens}
Stephens, P. \& Sutherland, W. [1999] \enquote{Consequences of the {Allee}
  effect for behaviour, ecology and conservation,} \emph{Trends in Ecology \&
  Evolution} \textbf{14},  401--405.

\bibitem[{Stephens \emph{et~al.}(1999)Stephens, Sutherland \&
  Freckleton}]{stephens2}
Stephens, P., Sutherland, W. \& Freckleton, R. [1999] \enquote{What is the
  {Allee} effect?} \emph{Oikos} \textbf{87},  185--190.

\bibitem[{Tintinago-Ru{\'\i}z \emph{et~al.}(2015)Tintinago-Ru{\'\i}z,
  Restrepo-Alape \& Gonz{\'a}lez-Olivares}]{tintinago}
Tintinago-Ru{\'\i}z, P., Restrepo-Alape, L. \& Gonz{\'a}lez-Olivares, E. [2015]
  \enquote{Consequences of {Weak Allee Effect} in a {Leslie--Gower-Type
  Predator--Prey Model} with a {Generalized Holling Type III Functional
  Response},}  \emph{Analysis, Modelling, Optimization, and Numerical
  Techniques} (Springer), pp. 89--103.

\bibitem[{Turchin(2003)}]{turchin}
Turchin, P. [2003] \emph{Complex population dynamics: a theoretical/empirical
  synthesis}, Monographs in population biology, Vol.~35 ({Princeton}
  {University} {Press}, Princeton, N.J.).

\bibitem[{Wechselberger \& Pettet(2010)}]{pettet}
Wechselberger, M. \& Pettet, G. [2010] \enquote{Folds, canards and shocks in
  advection--reaction--diffusion models,} \emph{Nonlinearity} \textbf{23},
  1949--1969.

\bibitem[{Wollkind \emph{et~al.}(1988)Wollkind, Collings \& Logan}]{wollkind}
Wollkind, D., Collings, J. \& Logan, J. [1988] \enquote{Metastability in a
  temperature--dependent model system for predator--prey mite outbreak
  interactions on fruit trees,} \emph{Bulletin of Mathematical Biology}
  \textbf{50},  379--409.

\bibitem[{Xiao \& Ruan(1999)}]{xiao2}
Xiao, D. \& Ruan, S. [1999] \enquote{{Bogdanov--Takens} bifurcations in
  predator--prey systems with constant rate harvesting,} \emph{Fields Institute
  Communications} \textbf{21},  493--506.

\bibitem[{Yu(2012)}]{yu}
Yu, S. [2012] \enquote{Global asymptotic stability of a predator-prey model
  with modified {Leslie--Gower} and {Holling--Type} {II} schemes,}
  \emph{Discrete Dynamics in Nature and Society} \textbf{2012}.

\bibitem[{Yue \emph{et~al.}(2013)Yue, Wang \& Liu}]{yue2}
Yue, Z., Wang, X. \& Liu, H. [2013] \enquote{Complex dynamics of a diffusive
  {Holling--Tanner} predator--prey model with the {Allee} effect,}
  \emph{Abstract and Applied Analysis} \textbf{2013}.

\bibitem[{Zhao \emph{et~al.}(2011)Zhao, Yang \& Chen}]{zhao}
Zhao, Z., Yang, L. \& Chen, L. [2011] \enquote{Impulsive perturbations of a
  predator--prey system with modified {Leslie--Gower} and {Holling} type {II}
  schemes,} \emph{Journal of Applied Mathematics and Computing} \textbf{35},
  119--134.

\end{thebibliography}
\end{document}